\newcommand{\be}{\begin{equation}}
\newcommand{\ee}{\end{equation}}
\newtheorem{thm}{Theorem}[section]
\newtheorem{lem}{Lemma}[section]
\newtheorem{prop}{Proposition}[section]
\newtheorem{defn}{Definition}[section]
\newtheorem{rmk}{Remark}[section]
\newtheorem{cor}{Corollary}[section]
\newtheorem{conj}{Conjecture}[section]
\newtheorem{exm}{Example}[section]
\newcommand{\R}{{\mathbb R}}
\newcommand{\A}{{\mathcal{A}}}
\numberwithin{equation}{section}
\numberwithin{figure}{section}
\pgfplotsset{compat=1.18}
\begin{document}

\title[Busemann-Selberg Functions and Completeness]{Busemann-Selberg Functions and Completeness for Dirichlet-Selberg domains in $SL(n,\mathbb{R})/SO(n,\mathbb{R})$}

\author{Yukun Du}
\address[Y. Du]{Department of Mathematics, University of Georgia, Athens, Georgia 30603}
\email{yukun.du@uga.edu}

%\subjclass[2010]{Primary 14H81, 58A50}

\keywords{}

\begin{abstract}
We establish a general completeness criterion for Dirichlet-Selberg domains in the symmetric space \(SL(n,\mathbb{R})/SO(n)\). By introducing and analyzing \emph{Busemann-Selberg functions} - which extend classical Busemann functions and capture asymptotic behavior toward the Satake boundary - we show that every gluing manifold or orbifold produced by Dirichlet-Selberg domain is complete. This result parallels the well-known hyperbolic case and ensures that the key completeness condition in Poincar\'e's Algorithm always holds in specific cases.

\end{abstract}

\date{\today}
\maketitle
%--------------------------------------------
%|-------SECTION--1-------------------------|
%--------------------------------------------
\tableofcontents

\section{Introduction}
This paper is motivated by a semi-decidable algorithm based on Poincar\'e's Fundamental Polyhedron Theorem. The original version of Poincar\'e's Algorithm addresses the geometric finiteness of a given subgroup of $SO^+(n,1)$. It was originally proposed by Riley \cite{riley1983applications} for the case $n=3$ and was later generalized to higher dimensions by Epstein and Petronio \cite{epstein1994exposition}.
\subsection{Poincar\'e's Algorithm}
The algorithm proceeds by employing a generalization of the Dirichlet domain in hyperbolic $n$-space, as introduced in \cite{kapovich2023geometric}:
\begin{defn}
    For a point $x$ in hyperbolic $n$-space $\mathbf{H}^n$ and a discrete subset $\Gamma_0$ of the Lie group $SO^+(n,1)$, the \textbf{Dirichlet Domain} for $\Gamma_0$ centered at $x$ is defined as
    \[
    D(x,\Gamma_0) = \{y\in \mathbf{H}^n|d(g.x,y)\geq d(x,y),\ \forall g\in\Gamma_0\},
    \]
    where $g.x\in\mathbf{H}^n$ denotes the action of $g\in SO^+(n,1)$ to $x\in\mathbf{H}^n$ as an orientation-preserving isometry.
\end{defn}
This definition extends the concept of Dirichlet Domains from discrete subgroups to discrete subsets. Using this construction, Poincar\'e's algorithm can be outlined as follows:

\textbf{Poincar\'e's Algorithm for $SO^+(n,1)$.}
\begin{enumerate}
    \item Assume that a subgroup $\Gamma<SO^+(n,1)$ is given by generators $g_1,\dots,g_m$, with relators initially unknown. We begin by selecting a point $x\in\mathbf{H}^n$, setting $l = 1$, and computing the finite subset $\Gamma_l\subset \Gamma$, which consists of elements represented by words of length $\leq l$ in the letters $g_i$ and $g_i^{-1}$.
    \item Compute the face poset of the Dirichlet domain $D(x,\Gamma_l)$, which forms a finitely-sided polytope in $\mathbf{H}^n$.
    \item Utilizing this face poset data, check if $D(x,\Gamma_l)$ satisfies the following conditions:
    \begin{enumerate}
        \item Verify that $D(x,\Gamma_l)$ is an \textbf{exact convex polytope}. For each $w\in \Gamma_l$, confirm that the isometry $w$ pairs the two facets contained in $\mathrm{Bis}(x,w.x)$ and $\mathrm{Bis}(x,w^{-1}.x)$, provided these facets exist.
        \item Verify that $D(x,\Gamma_l)$ satisfies the \textbf{tiling condition}, meaning that the quotient space $M$ obtained by identifying the paired facets of $D(x,\Gamma_l)$ is an $\mathbf{H}^n$-orbifold. This condition is formulated as a \textbf{ridge-cycle condition}, as described in \cite{ratcliffe1994foundations}.
        \item Verify that each generator $g_i$ can be expressed as a product of the facet pairings of $D(x,\Gamma_l)$, following the procedure in \cite{riley1983applications}.
    \end{enumerate}
    \item If any of these conditions are not met, increment $l$ by $1$ and repeat the initialization, computation and verification processes.
    \item If all conditions are satisfied, the quotient space of $D(x,\Gamma_l)$ is complete \cite{kapovich2023geometric}. By Poincar\'e's Fundamental Polyhedron Theorem, $D(x,\Gamma_l)$ is a fundamental domain for $\Gamma$, and $\Gamma$ is geometrically finite. Specifically, $\Gamma$ is discrete and has a finite presentation derived from the ridge cycles of $D(x,\Gamma_l)$ \cite{ratcliffe1994foundations}.
\end{enumerate}

The completeness condition is fundamental when applying Poincar\'e's Fundamental Polyhedron Theorem. If the quotient of the convex polytope $D$ by its facet pairing is incomplete, the facet-pairing transformations may generate additional relators. In the context of hyperbolic $3$-space, this phenomenon is closely related to \textbf{Hyperbolic Dehn fillings}\cite{thurston1982three}.

Consider, for instance, the \textbf{Meyerhoff manifold}\cite{meyerhoff1987lower}, which arises from completing an incomplete gluing of a certain ideal triangular bipyramid. This manifold corresponds to the $(5,1)$-Dehn filling on the \textbf{figure-eight knot complement}. While the ridge cycles of the Meyerhoff manifold provide relators for the facet pairings that agree combinatorially with those of the figure-eight knot group, additional relators emerge due to the Dehn filling condition,\cite{purcell2020hyperbolic}. Consequently, Poincar\'e's Fundamental Polyhedron Theorem cannot fully recover the group presentation generated by Meyerhoff facet-pairing transformations mentioned above.

Fortunately, for Dirichlet domains, the completeness condition is not a concern - as noted in Step (5) of Poincar\'e's Algorithm. The guaranteed satisfaction of the completeness condition can be explained through the concept of \textbf{Busemann Functions}, \cite{busemann1955geometry}:
\begin{defn}\label{def:1:2}
    Let $a\in \partial \mathbf{H}^n$ be an ideal point and $x\in \mathcal{H}^n$ be a reference point. For any geodesic ray $\gamma: \mathbb{R}\to \mathbf{H}^n$ asymptotic to $a$, and for any $y\in \mathbf{H}^n$, the limit
    \[
    b_{a,x}(y): = \lim_{t\to\infty}d(\gamma(t),y) - d(\gamma(t),x)
    \]
    exists and is independent of the choice of $\gamma$. This limit defines the Busemann function $b_{a,x}: \mathbf{H}^n\to \mathbb{R}$.
\end{defn}
It is well-known that the Busemann function satisfies the following asymptotic behavior:
\begin{itemize}
    \item If $\gamma$ is a geodesic ray asymptotic to $a$, then $\lim_{t\to\infty}b_{a,x}(\gamma(t)) = 0$.
    \item If $\gamma$ is any geodesic ray asymptotic to a different ideal point, then $\lim_{t\to\infty}b_{a,x}(\gamma(t)) = \infty$.
\end{itemize}
One considers the level sets of the Busemann functions, known as \textbf{horospheres} in $\mathbf{H}^n$. In the Poincar\'e disk model, horospheres are represented as $(n-1)$-spheres tangent to the visual boundary at the base points. For a finite-volume convex polytope, horospheres based at its ideal vertices serve to separate the cusp parts from the remainder of the polytope.

For Dirichlet Domains, the Busemann function exhibits the following invariance property:
\begin{lem}[\cite{kapovich2023geometric}]\label{lem:1:1}
    Let $D = D(x,\Gamma_0)$ be the Dirichlet Domain for a finite subset $\Gamma_0\subset SO^+(n,1)$ with center $x\in \mathbf{H}^n$, satisfying the following conditions:
    \begin{itemize}
        \item $D$ is exact: For each $g\in \Gamma_0$, we have $g^{-1}\in \Gamma_0$, and the two facets of $D$ contained in $\mathrm{Bis}(x,g.x)$ and $\mathrm{Bis}(x,g^{-1}.x)$ are isometric under the action of $g$.
        \item $D$ is finite-volume, i.e., $\overline{D}\cap \partial \mathbf{H}^n$ is a discrete set of ideal points.
    \end{itemize}
    Let $a\in \partial \mathbf{H}^n\cap \overline{D}$ be an ideal vertex, and suppose $g_1,\dots, g_m\in \Gamma_0$. Define the sequence of ideal points inductively as follows: $a_0 = a$ and $a_i = g_i.a_{i-1}$ for $i=1,\dots, m$. If the following conditions are satisfied:
    \begin{itemize}
        \item $\mathrm{Bis}(x,g_i.x)$ contains a certain facet of $D$ for $i=1,\dots,m$.
        \item The points $a_i$, $i=0,\dots, m$ are ideal vertices of $D$.
        \item The sequence satisfies $a_m = a_0$.
    \end{itemize}
    Then the word $w = g_m\dots g_1$ preserves the Busemann function based at $a$, i.e.,
    \[
    b_{a,x}(y) = b_{a,x}(w.y),\ \forall y\in \mathbf{H}^n.
    \]
\end{lem}
This invariance ensures that Cauchy sequences in the cusp region of the quotient $D/\sim$ remain bounded away from the visual boundary, thereby guaranteeing the completeness condition in Step (5) of Poincar\'e's Algorithm:
\begin{thm}[\cite{kapovich2023geometric}]\label{thm:1:1}
    Let $D = D(X,\Gamma_l)$ be a finitely-sided Dirichlet domain in $\mathbf{H}^n$ satisfying the tiling condition. Then the quotient space $M = D/\!\sim$ is complete. In particular, $D$ is a fundamental domain for the subgroup generated by its facet pairings.
\end{thm}
This property of the Dirichlet domain simplifies the implementation of Poincar\'e's Algorithm for $SO^+(n,1)$.

\subsection{The Symmetric Space \texorpdfstring{$SL(n,\mathbb{R})/SO(n)$}{Lg}}
Our research seeks to generalize Poincar\'e's Algorithm, extending it to other Lie groups, particularly $SL(n,\mathbb{R})$. It is well-established that $SL(n,\mathbb{R})$ acts as the orientation-preserving isometry group on the symmetric space $SL(n,\mathbb{R})/SO(n)$, \cite{eberlein1996geometry}. We recognize this space through the following models:
\begin{defn}
    The \textbf{hypersurface model} of $SL(n,\mathbb{R})/SO(n)$ is defined as the set
    \begin{equation}
        \mathcal{X}_n = \mathcal{X}_{n,\mathrm{hyp}} = \{X\in \mathrm{Sym}_n(\mathbb{R})\,|\,\det(X) = 1,\ X>0\},
    \end{equation}
    equipped with the metric tensor
    \[
    \langle A,B\rangle_X = \mathrm{tr}(X^{-1}AX^{-1} B),\ \forall A,B\in T_X\mathcal{X}_n.
    \]
    Here, $\mathrm{Sym}_n(\mathbb{R})$ denotes the vector space of $n\times n$ real symmetric matrices, and $X>0$ (or $X\geq 0$) indicates that $X$ is positive definite (or positive semi-definite, respectively). Throughout the paper, we adopt the bilinear form $\langle A,B\rangle: = \mathrm{tr}(A\cdot B)$ on $\mathrm{Sym}_n(\mathbb{R})$ and interpret orthogonality accordingly.
\end{defn}
In this model, the action of $SL(n,\mathbb{R})$ on $\mathcal{X}_n$ is given by
\[
SL(n,\mathbb{R})\curvearrowright \mathcal{X}_n,\ g.X = g^{\mathsf{T}} Xg.
\]
An alternative model is also considered in the paper:
\begin{defn}
    The \textbf{projective model} of $\mathcal{X}_n$ is defined as follows:
    \begin{equation}
        \mathcal{X}_n = \mathcal{X}_{n,\mathrm{proj}} = \{[X]\in \mathbf{P}(\mathrm{Sym}_n(\mathbb{R}))\,|\,X>0\}.
    \end{equation}
\end{defn}
It is evident that the two models of the symmetric space $\mathcal{X}_n$ are diffeomorphic.

Classic Dirichlet domains in $\mathcal{X}_n$ are non-convex and often impractical for further study. To overcome these challenges, our generalization of Poincar\'e's Algorithm utilizes an $SL(n,\mathbb{R})$-invariant proposed by Selberg\cite{selberg1962discontinuous} as a substitute for the Riemannian distance on $\mathcal{X}_n$.
\begin{defn}
    For $X,Y\in \mathcal{X}_n$, the \textbf{Selberg invariant} from $X$ to $Y$ is defined as
    \[
    \mathfrak{s}(X,Y) = \mathrm{tr}(X^{-1}Y).
    \]

    For a point $X\in \mathcal{X}_n$ and a discrete subset $\Gamma_0\subset SL(n,\mathbb{R})$, the \textbf{Dirichlet-Selberg Domain} for $\Gamma$ centered at $X$ is defined as
    \[
    DS(X,\Gamma_0) = \{Y\in\mathcal{X}_n|\mathfrak{s}(g.X,Y)\geq \mathfrak{s}(X,Y),\ \forall g\in\Gamma_0\}.
    \]
\end{defn}
Dirichlet-Selberg domains serve as fundamental domains when $\Gamma<SL(n,\mathbb{R})$ is a discrete subgroup satisfying $Stab_\Gamma(X) = \mathbf{1}$, \cite{kapovich2023geometric}. Moreover, these domains are realized as convex polyhedra in $\mathcal{X}_n$, defined as follows:
\begin{defn}
    A $k$-dimensional \textbf{plane} of $\mathcal{X}_n$ is the non-empty intersection of a $(k+1)$-dimensional linear subspace of $\mathrm{Sym}_n(\mathbb{R})$ with $\mathcal{X}_{n,\mathrm{hyp}}$. An $(n-1)(n+2)/2 - 1$-dimensional plane is referred to as a \textbf{hyperplane} of $\mathcal{X}_n$.

    \textbf{Half spaces} and \textbf{convex polyhedra} in $\mathcal{X}_n$ are defined analogously to the corresponding concepts in hyperbolic spaces \cite{ratcliffe1994foundations}.

    For a convex polytope $D$ in $\mathcal{X}_n$, its \textbf{faces}, \textbf{facets}, and \textbf{ridges} are also defined analogously. We denote the collections of these objects by $\mathcal{F}(D)$, $\mathcal{S}(D)$, and $\mathcal{R}(D)$, respectively.
\end{defn}
Hyperplanes in $\mathcal{X}_n$ can be realized as \textbf{perpendicular planes}. For any indefinite matrix $A\in \mathrm{Sym}_n(\mathbb{R})$, the set
\[
A^\perp = \{X\in\mathcal{X}_n|\mathrm{tr}(A.X) = 0\},
\]
is non-empty, and constitutes a hyperplane of $\mathcal{X}_n$, \cite{finsler1936vorkommen,Du2024-dp}. Specifically, the boundary of a Dirichlet-Selberg domain $DS(X,\Gamma)$ consists of bisectors:
\[
\mathrm{Bis}(X,g.X) = \{Y\in\mathcal{X}_n|\mathfrak{s}(X,Y) = \mathfrak{s}(g.X,Y)\},
\]
for $g\in \Gamma$. In the form of perpendicular planes, these bisectors are expressed as
\[
\mathrm{Bis}(X,g.X) = \left(X^{-1} - (g.X)^{-1}\right)^\perp.
\]
These facts provide suitable analogs to corresponding concepts in hyperbolic spaces for our proposed generalization of Poincar\'e's Algorithm to $SL(n,\mathbb{R})$.

In \cite{kapovich2023geometric}, a generalized version of Poincar\'e's Algorithm was proposed, adopting Dirichlet-Selberg domains in the Dirichlet construction process. Details of this algorithm are reviewed in Section \ref{Sec:2}.
\subsection{The Main Result}
The main purpose of this paper is to generalize Theorem \ref{thm:1:1} - the completeness property for hyperbolic Dirichlet domains - to Dirichlet-Selberg domains in $\mathcal{X}_n$. We focus on Dirichlet-Selberg domains of finite volume, which correspond to \textbf{lattices} in $SL(n,\mathbb{R})$. These subgroups play an important role among the discrete subgroups of $SL(n,\mathbb{R})$. In particular, the quotients of finite volume Dirichlet-Selberg domains exhibit favorable structures. By exploiting these properties and extending the approach in \cite{ratcliffe1994foundations}, we establish the following result:
\begin{thm}\label{thm:1:2}
Let $D = DS(X,\Gamma_0)$ be an exact partial Dirichlet-Selberg domain centered at $X\in \mathcal{X}_3$, defined with respect to a finite set $\Gamma_0\subset SL(3,\mathbb{R})$, and satisfying the tiling condition. If, in addition, $D$ has finite volume, then the quotient of $D$ under its intrinsic facet pairing is complete.
\end{thm}
The proof of Theorem \ref{thm:1:2} proceeds by constructing a family of generalized Busemann functions on $\mathcal{X}_3$, which possess specific invariance properties under the action of $SL(3,\mathbb{R})$. Moreover, we isolate the cusp regions of $D$ via generalized horospheres, analogous to the hyperbolic setting. Furthermore, we formulate these Busemann function constructions in the general setting of $\mathcal{X}_n$.
\subsection{Organization of the Paper}
This paper is structured as follows. In Section \ref{Sec:2}, we review the generalized Poincar\'e's Algorithm for the group $SL(n,\mathbb{R})$, and the compactifications of $\mathcal{X}_n$. In Section \ref{Sec:3}, we introduce the key construction - \textbf{Busemann-Selberg functions} on $\mathcal{X}_n$, define generalized horospheres via these functions, and study the structure of finite-volume Dirichlet-Selberg domains. In Section \ref{Sec:4}, we establish various properties of Busemann-Selberg functions, horospheres and hyperplanes, which are required for the proof of the main theorem. Section \ref{Sec:5} presents the proof of Theorem \ref{thm:1:2}, synthesizing earlier results. Finally, we give a concrete example in Section \ref{Sec:6}, constructing an exact finitely-sided Dirichlet-Selberg domain in $\mathcal{X}_n$ that illustrates our main results.

%%
%% - Move the Def of Satake compactification to here
%% - Add Satake compactification (Borel & Ji) and ideal compactification stuffs, especially page 60 to 62
% - Rename into "The symmetric space SL(n,R)" or so
\section{Preliminaries for the Symmetric Space \texorpdfstring{$\mathcal{X}_n$}{Lg}}\label{Sec:2}
\subsection{Poincar\'e's Algorithm for \texorpdfstring{$SL(n,\mathbb{R})$}{Lg}}
Let us recall the Poincar\'e's Algorithm on $\mathcal{X}_n$ described in \cite{kapovich2023geometric,Du2024-dp}, analogically to the real hyperbolic case.

We start by considering the \textbf{facet pairings} for convex polytopes in $\mathcal{X}_n$. These are analogous to the hyperbolic case:
\begin{defn}
    A convex polytope $D$ in $\mathcal{X}_n$ is said to be \textbf{exact} if, for each of its facets $F$, there exists an element $g_F\in SL(n,\mathbb{R})$ such that 
    \[
    F = D\cap g_F.D,
    \]
    and such that $F': = g_F^{-1}.F$ is also a facet of $D$. The transformation $g_F$ is referred to as a \textbf{facet pairing transformation} for the facet $F$.

    For an exact convex polytope $D$, a \textbf{facet pairing} is a set
    \[
    \Gamma_0 = \{g_F\in SL(n,\mathbb{R})|F\in \mathcal{S}(D)\},
    \]
    where each facet $F$ is assigned a facet pairing transformation $g_F$, and the transformations satisfy $g_{F'} = g_F^{-1}$ for every paired facets $F$ and $F'$.
\end{defn}
For a discrete subgroup $\Gamma<SL(n,\mathbb{R})$, the Dirichlet-Selberg domain $D = DS(X,\Gamma)$ has a canonical facet pairing. Each element $g\in\Gamma$ serves as the facet-pairing transformation between the facets contained in the bisectors $\mathrm{Bis}(X,g^{-1}.X)$ and $\mathrm{Bis}(X,g.X)$, provided these facets exist.

A facet pairing naturally defines an equivalence relation on $D$:
\begin{defn}
    Two points $X,X'$ in $D$ are said to be \textbf{paired} if $X\in F$, $X'\in F'$, and $g_F^{-1}.X = X'$ for a specific pair of facets $F$ and $F'$. This pairing defines a binary relation, denoted by $X\cong X'$. The equivalence relation generated by this binary relation is denoted by $\sim$.

    The \textbf{cycle} of a point $X$ in an exact convex polytope $D$ with a facet pairing $\Gamma_0$ is the equivalence class of $X$ under the relation induced by $\Gamma_0$.
\end{defn}
With the preliminaries above, we introduce the \textbf{tiling condition} involved in Poincar\'e's Algorithm:
\begin{defn}
    For an exact convex polytope $(D,\Gamma_0)$ in $\mathcal{X}_n$, the equivalence relation $\sim$ defines a quotient space $M = D/\sim$. The polytope is said to satisfy the \textbf{tiling condition} if the corresponding quotient space $M$, equipped with the path metric induced from $\mathcal{X}_n$, has the structure of a $\mathcal{X}_n$-manifold or orbifold.
\end{defn}
The tiling condition can be reformulated using a \textbf{ridge cycle condition}, analogous to the hyperbolic case described in \cite{ratcliffe1994foundations}. However, unlike hyperbolic polytopes, the dihedral angles between two facets of a $\mathcal{X}_n$-polytope depend on the choice of the base point. This dependency is further explored in Subsection \ref{subsec:3:2}. Nevertheless, the formulation of the ridge cycle condition remains valid when the base point is specified:
\begin{defn}
    Let $X$ be a point in the interior of a ridge $r$ of the polytope $D$. The cycle $[X]$ is said to satisfy the \textbf{ridge cycle condition} if the following criteria are met:
    \begin{itemize}
        \item The ridge cycle $[X]$ is a finite set $\{X_1,\dots,X_m\}$, and
        \item The dihedral angle sum satisfies
        \[
        \theta[X] = \sum_{i=1}^m\theta(X_i) = 2\pi/k,
        \]
        for certain $k\in\mathbb{N}$. Here, $\theta(X_i)$ denotes the Riemannian dihedral angle between the two facets containing $X_i$, measured at the point $X_i$.
    \end{itemize}
\end{defn}
In \cite{Du2024-dp}, we reformulate the ridge cycle condition by introducing a generalized angle-like function that does not depend on the choice of base points. This approach applies to generic pairs of hyperplanes, simplifying the implementation of Poincar\'e's Algorithm.

Using the framework explained above, we propose a generalized Poincar\'e's Algorithm for the Lie group $SL(n,\mathbb{R})$, parallel to the classical algorithm for $SO^+(n,1)$:\cite{kapovich2023geometric,Du2024-dp}

\textbf{Poincar\'e's Algorithm for $SL(n,\mathbb{R})$.}
\begin{enumerate}
    \item Assume that a subgroup $\Gamma<SL(n,\mathbb{R})$ is given by generators $g_1,\dots,g_m$, with relators initially unknown. We begin by selecting a point $X\in\mathcal{X}_n$, setting $l = 1$, and computing the finite subset $\Gamma_l\subset \Gamma$, which consists of elements represented by words of length $\leq l$ in the letters $g_i$ and $g_i^{-1}$.
    \item Compute the face poset of the Dirichlet-Selberg domain $DS(X,\Gamma_l)$, which forms a finitely-sided polytope in $\mathcal{X}_n$.
    \item Utilizing this face poset data, check if $DS(X,\Gamma_l)$ satisfies the following conditions:
    \begin{enumerate}
        \item Verify that $DS(X,\Gamma_l)$ is an \textbf{exact convex polytope}. For each $w\in \Gamma_l$, confirm that the isometry $w$ pairs the two facets contained in $\mathrm{Bis}(X,w.X)$ and $\mathrm{Bis}(X,w^{-1}.X)$, provided these facets exist.
        \item Verify that $D(X,\Gamma_l)$ satisfies the \textbf{tiling condition}, which is introduced above.
        \item Verify that each element $g_i$ can be expressed as a product of the facet pairings of $DS(X,\Gamma_l)$, following the procedure in \cite{riley1983applications}.
    \end{enumerate}
    \item If any of these conditions are not met, increment $l$ by $1$ and repeat the initialization, computation, and verification processes.
    \item If all conditions are satisfied, we verify if the quotient space of $DS(X,\Gamma_l)$ is complete. If so, by Poincar\'e's Fundamental Polyhedron Theorem, $DS(X,\Gamma_l)$ is a fundamental domain for $\Gamma$, and $\Gamma$ is geometrically finite. Specifically, $\Gamma$ is discrete and has a finite presentation derived from the ridge cycles of $DS(X,\Gamma_l)$.
\end{enumerate}
Several questions arise from this algorithm. As a semi-decidable procedure, it is clear that for a given center \(X\in \mathcal{X}_n\) and subgroup \(\Gamma < SL(n,\mathbb{R})\), the algorithm terminates in finite time if and only if the Dirichlet-Selberg domain \(DS(X,\Gamma)\) is finitely-sided. It remains unknown whether this condition holds for nonuniform lattices \cite{kapovich2023geometric}. Davalo and Riestenburg \cite{davalo2024finitesideddirichletdomainsanosov} showed that uniform lattices in \(SO(n-1,1)\), when regarded as subgroups of \(SL(n,\mathbb{R})\) via the canonical inclusion 
\[
SO(n-1,1)\hookrightarrow SL(n,\mathbb{R}),
\]
do not admit finitely-sided Dirichlet-Selberg domains for any center. This result gives a negative answer to Kapovich’s question on whether Anosov subgroups always admit finitely-sided Dirichlet-Selberg domains.

Davalo and Riestenburg also considered the \(\lvert \log \omega_i\rvert\)-undistorted subgroups of \(SL(2n,\mathbb{R})\), proving that these subgroups admit finitely-sided Dirichlet-Selberg domains for every choice of center. The \(\lvert \log \omega_i\rvert\)-undistorted property holds for the Schottky groups in \(SL(2n,\mathbb{R})\) we constructed in \cite{Du2024-dp}, but does not extend to subgroups of \(SL(2n-1,\mathbb{R})\).

Another question concerns the completeness condition for Dirichlet-Selberg domains in $\mathcal{X}_n$. This condition is required by Poincar\'e's Fundamental Polyhedron Theorem and holds for all hyperbolic Dirichlet domains with the tiling condition (see Theorem \ref{thm:1:1}). Kapovich conjectured an analogous property holds for Dirichlet-Selberg domains:
\begin{conj}[\cite{kapovich2023geometric}]
    Let $D = DS(X,\Gamma_l)$ be a finitely-sided Dirichlet-Selberg domain in $\mathcal{X}_n$ satisfying the tiling condition. Then the quotient space $M = D/\sim$ is complete. In particular, $D$ is a fundamental domain for the subgroup generated by its facet pairings.
\end{conj}
This conjecture motivates our main result, which establishes the same conclusion under the additional hypothesis that \(D\) has finite volume.
\subsection{Compactifications of \texorpdfstring{$\mathcal{X}_n$}{Lg}}
In this paper We employ several compactifications of the symmetric space \(\mathcal{X}_n\). In particular, the \textbf{Satake compactification} arises naturally from the polyhedral structure of Dirichlet-Selberg domains, while the \textbf{visual compactification} is essential for studying the geometry and completeness of \(\mathcal{X}_n\)-manifolds.

Satake \cite{satake1960representations} introduced a family of compactifications of symmetric spaces associated to faithful finite-dimensional representations of the ambient Lie group.  The \textbf{standard} Satake compactification of \(\mathcal{X}_n\) corresponds to the identity representation of \(SL(n,\mathbb{R})\) and admits the following description via a projective model \cite{borel2006compactifications}:
\begin{defn}
    The \textbf{standard Satake compactification} of $\mathcal{X}_n$ is
    \[
    \overline{\mathcal{X}_n}^S = \{\widetilde{X}\in \mathbf{P}(\mathrm{Sym}_n(\mathbb{R}))\mid X\geq 0\},
    \]
    endowed with the projective topology on $\mathrm{Sym}_n(\mathbb{R})$. The \textbf{Satake boundary} is
    \[
    \partial_S \mathcal{X}_n = \overline{\mathcal{X}_n}^S\backslash \mathcal{X}_n.
    \]
\end{defn}
When the context is clear we shall omit the superscript \(S\) and simply write \(\overline{\mathcal{X}}_n\).
\begin{prop}[\cite{borel2006compactifications}]
    The standard Satake compactification decomposes as
    \[
    \overline{\mathcal{X}_n} = \mathcal{X}_n\sqcup\bigsqcup_{k=1}^{n-1}\left(SL(n,\mathbb{R})\mathcal{X}_k\right),
    \]
    where each $\mathcal{X}_k = SL(k,\mathbb{R})/SO(k)$ embeds into $\partial_S\mathcal{X}_n$ via
    \[
    \mathcal{X}_k\hookrightarrow \overline{\mathcal{X}_n},\ X\mapsto \mathrm{diag}(X,O_{n-k}),
    \]
    and $SL(n,\mathbb{R})$ acts by congruence on the set of semi-definite matrices.
\end{prop}
More generally, for any $g\in SL(n,\mathbb{R})$ and $k=1,\dots,n-1$, the image $g.\mathcal{X}_k\subset \partial_S\mathcal{X}_n$ is called a \textbf{Satake boundary component}. Under the projective model, $g.\mathcal{X}_k$ identifies with the set of positive semidefinite \(n\times n\) matrices of rank \(k\) whose column space is the \(k\)-subspace
\[
\mathrm{span}(g.\mathbf{e}_1,\dots,g.\mathbf{e}_k)\subset \mathbb{R}^n.
\]
We denote by \(\partial_S(V)\) the boundary component corresponding to a linear subspace \(V\subset\mathbb{R}^n\), and we say its \textbf{type} is \(k=\dim V\). If \(V=\mathrm{span}(\mathbf v_1,\dots,\mathbf v_k)\), we write
\[
  \partial_S V \;=\;\partial_S(\mathbf v_1,\dots,\mathbf v_k).
\]
Whenever \(W\subset V\), the boundary component \(\partial_S(W)\) lies in the boundary of \(\partial_S(V)\); we express this by $\partial_S(W)<\partial_S(V)$, and write $\partial_S(W)\leq\partial_S(V)$ if $W\subseteq V$. The \textbf{compactification} of \(\partial_S(V)\) is the disjoint union of all subordinate components:
\[
\overline{\partial_S(V)} = \bigsqcup_{W\subseteq V}\partial_S(W).
\]
Dually, the \textbf{star} of \(\partial_S(V)\) consists of those components whose subspaces contain \(V\):
\[
\mathrm{st}(\partial_S(V)) = \bigsqcup_{U\supseteq V, U\subsetneq \mathbb{R}^n}\partial_S(U).
\]
Since all type-\(k\) components lie in a single $SL(n,\mathbb{R})$-orbit, each \(\partial_S(V)\) is diffeomorphic to the symmetric space \(\mathcal X_k\).  More precisely:
\begin{defn}\label{def:3:5}
    Let \(V\subset\R^n\) be a \(k\)-dimensional subspace and choose an orthonormal basis \(\iota_V=(v_1,\dots,v_k)\in\R^{n\times k}\). Define
    \[
      \pi_V\colon \partial_S(V)\;\longrightarrow\;\mathcal X_k,
      \qquad
      \pi_V(\alpha)
      =
      \frac{\iota_V^{\mathsf{T}}\,\alpha\,\iota_V}
           {\bigl(\det(\iota_V^{\mathsf{T}}\,\alpha\,\iota_V)\bigr)^{1/k}}.
    \]
    This extends to a projection \(\pi_V\colon \mathcal X_n\sqcup\mathrm{st}(\partial_S(V))\to\mathcal X_k\). In the projective model one similarly obtains
    \[
      \pi_V\colon \overline{\mathcal X_n}\;\longrightarrow\;\overline{\mathcal X_k}, \quad \pi_V(\widetilde{X}) = \widetilde{\iota_V^\mathsf{T}X\iota_V}.
    \]
    The map \(\pi_V\) is well-defined up to the action of \(SO(k)\).
\end{defn}

As with any non-compact symmetric space, \(\mathcal X_n\) admits a \textbf{visual compactification} obtained by adjoining equivalence classes of geodesic rays.
\begin{defn}[\cite{eberlein1996geometry}]
    A \textbf{geodesic ray} in \(\mathcal{X}_n\) may be written as
    \[
    \gamma(t) = g.\exp(tA),\quad g\in SL(n,\mathbb{R}),\ A\in\mathfrak{sl}(n,\mathbb{R}),\ A^\mathsf{T} = A.
    \]
    Two rays \(\gamma_1,\gamma_2\) are equivalent, \(\gamma_1\sim\gamma_2\), if
    \[
    \overline{\lim}_{t\to\infty}(\gamma_1(t),\gamma_2(t))<\infty.
    \]
    The \textbf{visual boundary} \(\partial_\infty\mathcal X_n\) is the set of equivalence classes of geodesic rays in $\mathcal{X}_n$, and the \textbf{visual compactification} is
    \[
    \overline{\mathcal{X}_n}^\infty = \mathcal{X}_n\sqcup \partial_\infty\mathcal{X}_n,
    \]
    endowed with the cone topology.
\end{defn}
The visual boundary \(\partial_\infty\mathcal X_n\) carries the structure of a spherical building, identified with the \textbf{complex of flags} in \(\R^n\).
\begin{defn}[\cite{bridson2013metric}]
  The \textbf{complex of flags} in \(\mathbb{R}^n\) is the simplicial complex whose \(k\)-simplices correspond to flags
  \[
    V_\bullet = V_1\subset V_2\subset\dots\subset V_{k+1}\subset\mathbb{R}^n,
  \]
  for $k=0,\dots, n-2$. The facets of a \(k\)-simplex are obtained by deleting one subspace from the flag.
\end{defn}
\begin{prop}[\cite{bridson2013metric}]
    There is an \(SL(n,\R)\)-equivariant bijection from $\partial_\infty\mathcal{X}_n$ to the complex of flags in $\mathbb{R}^n$. Concretely, if $A$ has distinct eigenvalues $\lambda_1>\dots>\lambda_k$ with corresponding eigenspaces $W_1,\dots, W_k$, then the ray $\gamma(t) = g.\exp(t A)$ determines the flag
    \[
    g^{-1}.W_1\subset g^{-1}.(W_1\oplus W_2)\subset\dots\subset g^{-1}.\oplus_{i=1}^{k-1}W_{i}\subset \mathbb{R}^n.
    \]
\end{prop}
In particular, the vertices of $\partial_\infty \mathcal{X}_n$ correspond to linear subspaces \(V\subset\mathbb{R}^n\). We denote the ideal vertex associated to \(V\) by \(\xi_V\) and call its \textbf{type} \(k=\dim V\). Moreover, for each subspace $V\subset \mathbb{R}^n$, the stabilizer of \(\partial_S(V)\subset\partial_S\mathcal X_n\) in \(SL(n,\R)\) coincides with
the stabilizer of \(\xi_V\), namely the maximal parabolic subgroup
preserving \(V\) \cite{borel2006compactifications}.
% Consider move these to the section:
% Satake planes -> boundary component
% The diffeo
% Also, let me avoid the using of \partial_S V^\perp
\vspace{12pt}
\section{Satake Faces, Busemann-Selberg Function, and Horoballs}\label{Sec:3}
In this section we extend the classical Busemann function to define \textbf{Busemann-Selberg functions} and their level sets (\textbf{horoballs}) in $\mathcal{X}_n$. We then examine the polyhedral structure of finite-volume Dirichlet-Selberg domains and introduce the notions of \textbf{Satake boundary components} and \textbf{Satake faces} of such domains. These constructions are essential in the proof of our main theorem.
\subsection{Busemann-Selberg Functions and Horoballs in \texorpdfstring{$\mathcal{X}_n$}{Lg}}
The classical Busemann function is defined as a limit of distance differences in hyperbolic space (see Definition~\ref{def:1:2}). We generalize this concept by replacing the hyperbolic distance by Selberg’s invariant \(\mathfrak s\) on the symmetric space $\mathcal{X}_n$.
\begin{defn}
    Let $X\in\mathcal{X}_n$ and $\alpha\in \partial_S\mathcal{X}_n$. Choose any path $A(t)\subset \mathcal{X}_n$ with
    \[
    \lim_{t\to \infty}A(t) = \alpha.
    \]
    The \textbf{(type $0$) Busemann-Selberg function} based at $\alpha$ with reference point $X$ is
    \[
    \mathfrak{b}_{\alpha,X}: \mathcal{X}_n\to \mathbb{R}_+,\quad \mathfrak{b}_{\alpha,X}(Y) = \lim_{t\to \infty}\frac{\mathfrak{s}(Y,A(t))}{\mathfrak{s}(X,A(t))}.
    \]
\end{defn}
\begin{rmk}
    If $\alpha$ is represented by a singular positive semi-definite matrix (also denoted \(\alpha\)), one obtains the closed-form
    \[
      \mathfrak{b}_{\alpha,X}(Y) = \frac{\mathrm{tr}(Y^{-1}\alpha)}{\mathrm{tr}(X^{-1}\alpha)},\quad \forall Y\in\mathcal{X}_n,
    \]
    which is independent of the choice of matrix representative for $\alpha$.
\end{rmk}
The proof of the main theorem requires the following generalization of the Busemann-Selberg function, obtained by composing a type-$0$ Busemann-Selberg function with the projection onto a Satake boundary component.
\begin{defn}\label{def:3:7}
    Let $X\in \mathcal{X}_n$ and $\Pi$ is a boundary component of type $n-k$. Suppose $\alpha$ lies on $\partial\Pi$, so that $\mathrm{rank}(\alpha)<n-k$. Let
    \[
    \pi: \overline{\mathcal{X}_n}\to \overline{\mathcal{X}_{n-k}}
    \]
    be the projection associated to \(\Pi\) (cf. Definition \ref{def:3:5}). The \textbf{type-$k$ Busemann-Selberg function} based at $(\alpha,\Pi)$ with reference point $X$ is
    \[
    \mathfrak{b}^{(k)}_{\Pi;\alpha,X}: \mathcal{X}_n\to \mathbb{R}_+,\quad \mathfrak{b}^{(k)}_{\Pi;\alpha,X}(Y) = \mathfrak{b}_{\pi(\alpha),\pi(X^{-1})^{-1}}(\pi(Y^{-1})^{-1}) = \frac{\mathrm{tr}(\pi(Y^{-1})\pi(\alpha))}{\mathrm{tr}(\pi(X^{-1})\pi(\alpha))}.
    \]
\end{defn}
In concrete terms, if \(\iota_\Pi\in\mathbb{R}^{n\times(n-k)}\) has orthonormal columns spanning \(\Pi\), one checks
\begin{equation}\label{equ:3:2}
  \mathfrak{b}^{(k)}_{\Pi;\alpha,X}(Y) = \frac{\mathrm{tr}(Y^{-1}\alpha)\det(\iota_\Pi^{\mathsf{T}}Y^{-1}\iota_\Pi)^{-1/(n-k)}}{\mathrm{tr}(X^{-1}\alpha)\det(\iota_\Pi^{\mathsf{T}}X^{-1}\iota_\Pi)^{-1/(n-k)}}.
\end{equation}
If one replaces \(\iota_\Pi\) by \(\iota_\Pi\,Q\) with \(Q\in SO(n-k)\), then
\[
    \det((\iota_\Pi Q)^\mathsf{T}X^{-1}(\iota_\Pi Q)) = \det(Q)^2\det(\iota_\Pi^{\mathsf{T}}X^{-1}\iota_\Pi) = \det(\iota_\Pi^{\mathsf{T}}X^{-1}\iota_\Pi),
\]
so that \(\mathfrak{b}^{(k)}_{\Pi;\alpha,X}\) is well-defined.
\begin{exm}
    Let $\Pi = \partial_S(\mathbf{e}_1,\mathbf{e}_2)\subset \overline{\mathcal{X}_3}$, a boundary component of type $2$ consisting of matrices with vanishing third rows and columns. Let $\alpha = \mathbf{e}_1\otimes \mathbf{e}_1$, a component of type $1$ (i.e., a Satake point) on the boundary of $\Pi$. Then, for $X = (x^{ij})^{-1}$ and $Y = (y^{ij})^{-1}$, the type one Busemann-Selberg function is given by
    \[
    \mathfrak{b}^{(1)}_{\Pi;\alpha,X}(Y) = \frac{y^{11}/\sqrt{y^{11}y^{22} - (y^{12})^2}}{x^{11}/\sqrt{x^{11}x^{22} - (x^{12})^2}}.
    \]
\end{exm}
Busemann-Selberg functions can be expressed in terms of the classical Busemann functions $b_{\xi,X}$. In particular, when \(\alpha\) is a rank-one Satake point, the logarithm of a (type-\(k\)) Busemann-Selberg function decomposes as an explicit linear combination of the corresponding Busemann functions.
\begin{prop}\label{prop:3:2}
    Let $\Pi = \partial_S(V)$ be a boundary component of type $(n-k)\geq 2$, and $\alpha = \mathbf{v}\otimes \mathbf{v}$ be a Satake point on $\partial \Pi$ (so $\mathbf{v}\in V$). Denote by $\xi_{\mathbf{v}}$ and $\xi_V$ the corresponding vertices in the visual boundary $\partial_\infty \mathcal{X}_n$. Then for all \(X,Y\in\mathcal X_n\):
    \[
    \begin{split}
        & \log\mathfrak{b}_{\alpha,X}(Y) = \sqrt{\frac{n-1}{n}}b_{\xi_{\mathbf{v}},X}(Y), \\
        & \log\mathfrak{b}^{(k)}_{\Pi;\alpha,X}(Y) = \sqrt{\frac{n-1}{n}}b_{\xi_{\mathbf{v}},X}(Y) - \sqrt{\frac{k}{n(n-k)}}b_{\xi_V,X}(Y).
    \end{split}
    \]
\end{prop}
\begin{proof}
    By \(SL(n,\mathbb{R})\)-equivariance we may assume $\mathbf{v} = \mathbf{e}_1$ and $V = \mathrm{span}(\mathbf{e}_1,\dots,\mathbf{e}_{n-k})$. Explicit formulas in \cite{hattori1995collapsing} give
    \[
    b_{\xi_{\mathbf{v}},X}(Y) = \sqrt{\frac{n}{n-1}}\log \frac{Y^{-1}_{[1]}}{X^{-1}_{[1]}},\quad b_{\xi_V,X}(Y) = \sqrt{\frac{n}{k(n-k)}}\log \frac{Y^{-1}_{[n-k]}}{X^{-1}_{[n-k]}},
    \]
    where \(Y^{-1}_{[i]}\) denotes the $i$-th leading principal minor of $Y^{-1}$. One checks directly that these equalities coincide with the ratios defining \(\mathfrak b_{\alpha,X}\) and \(\mathfrak b^{(k)}_{\Pi,\alpha,X}\), yielding the claimed linear relations.
\end{proof}
Since higher-rank \(\alpha\) decompose as sums of rank-one matrices, every Busemann-Selberg function (of any type) can be written in terms of the original Busemann functions \(b_{\xi,X}\).

The main result in this subsection is the following $1$-Lipschitz continuity for Busemann-Selberg functions.
\begin{prop}\label{prop:3:4}
    Let $\Pi$ and $\alpha$ be as above, and any $X,Y_1,Y_2\in\mathcal{X}_n$. Then
    \[
    \lvert\log \mathfrak{b}^{(k)}_{\Pi;\alpha,X}(Y_1) - \log \mathfrak{b}^{(k)}_{\Pi;\alpha,X}(Y_2)\rvert\leq \sqrt{\frac{n-k-1}{n-k}}d(Y_1,Y_2).
    \]
\end{prop}
\begin{lem}\label{lem:3:3}
    The projection $\pi: \mathcal{X}_n\to \mathcal{X}_{n-k}$ from Definition \ref{def:3:5} is $1$-Lipschitz, i.e.,
    \[
    d(\pi(Y_1),\pi(Y_2))\leq d(Y_1,Y_2).
    \]
\end{lem}
% Implication of the lemma: if a line is asymptotic to Y, then the epsilon-neighborhood is asymptotic to the eplislon-neighborhood of Y.
% On one hand, for any point in the nbh of Y, we can construct a series of points within the epsilon nbh of the line and is asymptotic to it.
% On the other hand, if Y_i to Y, Z_i to Z, then \pi(Y_i) to Y and \pi(Z_i) to Z as well. Since the dist between \pi(Y_i) and \pi(Z_i) is not more than Y_i and Z_i - thus no more than epsilon, the dist Y to Z is no more than epsilon as well.
\begin{proof}
    Without loss of generality, take $Y_1=I_n$. Since $\pi$ is conjugation by an orthonormal-column matrix $\iota$, we have $\pi(Y_1) = \iota^{\mathsf{T}}\iota = I_{n-k}$. Hence it suffices to prove
    \[
    d\left(I_{n-k},\frac{\iota^{\mathsf{T}}Y\iota}{\det(\iota^{\mathsf{T}}Y\iota)^{1/(n-k)}}\right)\leq d(I_n,Y),
    \]
    where $Y = Y_2$. Let $\lambda_1\geq\dots\geq \lambda_n$ be the eigenvalues of $Y$, and let $\mu_1\geq\dots\geq\mu_{n-k}$ be those of $\iota^{\mathsf{T}}Y\iota$. By the Poincar\'e separation theorem,
    \[
    \lambda_i\geq \mu_i\geq \lambda_{i+k},\ i=1,\dots,n-k.
    \]
    Then, using Lemma~\ref{lem:append} in the Appendix and $\sum_i\log\lambda_i=0$, one obtains
    \[
    \begin{split}
        & d^2(I_{n-k},\iota^{\mathsf{T}}Y\iota/\det(\iota^{\mathsf{T}}Y\iota)^{1/(n-k)})\\
        & = \sum_{i=1}^{n-k}(\log \mu_i - \overline{\log \mu})^2\leq \sum_{i=1}^n(\log\lambda_i)^2 = d^2(I_n,Y),
    \end{split}
    \]
    which gives the desired bound.
\end{proof}
\begin{proof}[Proof of Proposition \ref{prop:3:4}]
    For $k=0$, the $1$-Lipschitz continuity for $\mathfrak{b}_{\alpha,X}$ with $\mathrm{rank}(\alpha) = 1$ follows from Proposition \ref{prop:3:2}. Any higher-rank $\alpha$ decomposes into rank-$1$ summands, so the result extends by linearity.
    
    For $k>0$, one reduces to the type-zero case in $\mathcal{X}_{n-k}$:
    \[
    \begin{split}
        & |\log \mathfrak{b}^{(k)}_{\Pi;\alpha,X}(Y_1) - \log \mathfrak{b}^{(k)}_{\Pi;\alpha,X}(Y_2)| \\
        & = \lvert \log \mathfrak{b}_{\pi(\alpha),\pi(X^{-1})^{-1}}(\pi(Y_1^{-1})^{-1}) - \log \mathfrak{b}_{\pi(\alpha),\pi(X^{-1})^{-1}}(\pi(Y_2^{-1})^{-1})\rvert \\
        & \leq \sqrt{\tfrac{n-k-1}{n-k}}d(\pi(Y_1^{-1})^{-1},\pi(Y_2^{-1})^{-1}).
    \end{split}
    \]
    Since $d(Y_1,Y_2)=d(Y_1^{-1},Y_2^{-1})$ and by Lemma~\ref{lem:3:3},
    \[
    d(\pi(Y_1^{-1})^{-1},\pi(Y_2^{-1})^{-1}) = d(\pi(Y_1^{-1}),\pi(Y_2^{-1}))\leq d(Y_1^{-1},Y_2^{-1}) = d(Y_1,Y_2),
    \]
    the proposition follows.
\end{proof}
We shall refer to the sublevel sets of the Busemann-Selberg functions as \textbf{horoballs}, and their level sets as \textbf{horospheres}.
\begin{defn}
    Let $\alpha\in \partial_S\mathcal{X}_n$ be a Satake boundary point and fix a reference point $X\in\mathcal{X}_n$. For each $r\in\mathbb{R}_+$, the \textbf{closed horoball} based at $\alpha$ with parameter $r$ is
    \[
    B(\alpha,r) = \{Y\in \mathcal{X}_n\mid \mathfrak{b}_{\alpha,X}(Y)\leq r\}.
    \]
    Replacing ``$\leq$'' by ``$<$'' yields the corresponding \textbf{open horoball}.

    The \textbf{horosphere} at level $r$ is the level set
    \[
    \Sigma(\alpha,r) = \{Y\in \mathcal{X}_n\mid \mathfrak{b}_{\alpha,X}(Y) = r\}.
    \]
\end{defn}
This construction generalizes to higher-type settings:
\begin{defn}
    Let $\Pi$ be a boundary component of type $n-k$, let $\alpha\in \partial \Pi$, and fix $X\in\mathcal{X}_n$. For each $r\in\mathbb{R}_+$, define the \textbf{$k$-th horoball} at $(\alpha,\Pi)$ by
    \[
    B_\Pi^{(k)}(\alpha,r) = \{Y\in \mathcal{X}_n\mid \mathfrak{b}_{\Pi;\alpha,X}^{(k)}(Y)\leq r\},
    \]
    and the corresponding \textbf{$k$-th horosphere} by
    Similarly, the  based at $(\Pi,\alpha)$ with parameter $r$ is defined as
    \[
    \Sigma_\Pi^{(k)}(\alpha,r) = \{Y\in \mathcal{X}_n\mid \mathfrak{b}_{\Pi;\alpha,X}^{(k)}(Y)= r\}.
    \]
\end{defn}
We illustrate these horospheres by restricting to the $2$-plane of diagonal matrices in $\overline{\mathcal X}_3$, with vertices~$\mathbf e_i\otimes\mathbf e_i$, $i=1,2,3$:
\begin{figure}[H]
    \begin{minipage}{0.32\textwidth}
        \centering
        \includegraphics[]{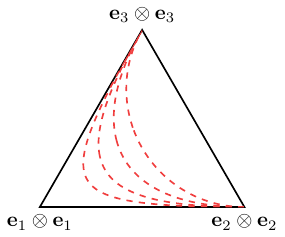}
    \end{minipage}
    \begin{minipage}{0.32\textwidth}
        \centering
        \includegraphics[]{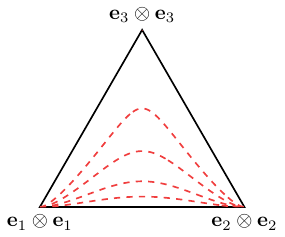}
    \end{minipage}
    \begin{minipage}{0.32\textwidth}
        \centering
        \includegraphics[]{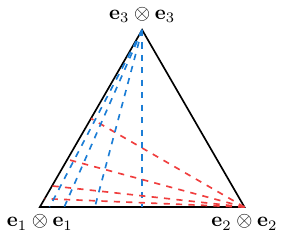}
    \end{minipage}
    \caption{Left: horospheres $\Sigma(\mathbf{e}_1\otimes\mathbf{e}_1,r)$ for varying $r$.\\ Center: horospheres $\Sigma(\mathbf{e}_1\otimes\mathbf{e}_1 + \mathbf{e}_2\otimes\mathbf{e}_2,r)$.\\ Right: type-$1$ horospheres $\Sigma^{(1)}_{\partial_S(\mathbf{e}_1,\mathbf{e}_2)}(\mathbf{e}_1\otimes\mathbf{e}_1,r)$ and $\Sigma^{(1)}_{\partial_S(\mathbf{e}_1,\mathbf{e}_3)}(\mathbf{e}_1\otimes\mathbf{e}_1,r)$ superimposed.}
\end{figure}
\subsection{Asymptotic Behavior of Busemann-Selberg Functions}\label{subsec:3:4}
In this subsection, we describe the asymptotic behavior of the Busemann-Selberg functions near the Satake boundary of~$\mathcal{X}_n$.

Recall that in hyperbolic geometry, the Busemann function $b_a(y)$ at an ideal point $a$ diverges to $+\infty$ whenever $y$ approaches any boundary point other than~$a$.  Analogous phenomena occur in the higher-rank symmetric space~$\mathcal{X}_n$.
\begin{lem}
    Let $\Pi\subset \overline{\mathcal{X}_n}$ be a boundary component of type $n-k$, pick $\alpha\in \partial\Pi$, and fix $X\in \mathcal{X}_n$. Suppose $\beta\in \partial_S \mathcal{X}_n$ satisfies
    \[
    \operatorname{Col}(\alpha)\backslash \operatorname{Col}(\beta)\neq \varnothing\text{ and } \operatorname{Col}(\Pi)\cap \operatorname{Col}(\beta) \neq \varnothing.
    \]
    Then for any $Y\in \mathcal{X}_n$,
    \[
    \lim_{\epsilon\to 0_+}\mathfrak{b}^{(k)}_{\Pi;\alpha,X}(\beta+\epsilon Y) = \infty.
    \]
\end{lem}
\begin{proof}
    After an $SL(n,\mathbb{R})$-action, we may assume
    \[
    \operatorname{Col}(\Pi) = \mathrm{span}\{\mathbf e_1,\dots,\mathbf e_{n-k}\}, 
    \quad
    \operatorname{Col}(\alpha) = \mathrm{span}\{\mathbf e_i : i\in\mathcal{A}\}, 
    \quad
    \operatorname{Col}(\beta)  = \mathrm{span}\{\mathbf e_j : j\in\mathcal{B}\},
    \]
    where $\mathcal{A}, \mathcal{B}\subset \{1,\dots,n\}$, with $\mathcal A\setminus\mathcal B\neq\varnothing$ but $\mathcal B\cap\{1,\dots,n-k\}\neq\varnothing$. 
    
    Write
    \[
    (\beta+\epsilon Y)^{-1}
    = M_{-1}\epsilon^{-1} + M_0 + O(\epsilon).
    \]
    The coefficient matrix $M_{-1}$ of the leading term is semi-positive definite, while its restriction to $\operatorname{Col}(\alpha)\backslash \operatorname{Col}(\beta)$ is positive definite. Hence
    \[
    \mathrm{tr}((\beta+\epsilon Y)^{-1}\alpha) = O(\epsilon^{-1})
    \]
    with a positive coefficient.
    
    On the other hand, let $\iota_\Pi = (\mathbf{e}_{1},\dots,\mathbf{e}_{(n-k)})$. Since $\operatorname{Col}(\Pi)\cap \operatorname{Col}(\beta) \neq \varnothing$, the principal $(n-k)\times (n-k)$-minor of $M_{-1}$ has at least one zero row and column. Therefore
    \[
    \det(\iota_\Pi^{\mathsf{T}}(\beta+\epsilon Y)^{-1}\iota_\Pi) = o(\epsilon^{-(n-k)}).
    \]
    Putting these estimates together,
    \[
    \mathfrak{b}^{(k)}_{\Pi;\alpha,X}(\beta+\epsilon Y) \varpropto \mathrm{tr}((\beta+\epsilon Y)^{-1}\alpha)\det (\iota_\Pi^{\mathsf{T}}(\beta+\epsilon Y)^{-1}\iota_\Pi)^{-1/(n-k)} \xrightarrow{\epsilon\to 0_+}\infty,
    \]
    as claimed.
\end{proof}
Differing from the hyperbolic case, the higher-type Busemann-Selberg functions on $\mathcal{X}_n$ can tend to zero as points approach certain boundary strata.
\begin{lem}\label{lem:3:5}
    Let $\Pi\subset \overline{\mathcal{X}_n}$ be a boundary component of type $n-k$, choose $\alpha\in \partial\Pi$, and fix $X\in \mathcal{X}_n$. Suppose $\beta\in \partial_S \mathcal{X}_n$ satisfied
    \[
    \operatorname{Col}(\alpha)\subset \operatorname{Col}(\beta)\text{ and }\operatorname{Col}(\Pi)\backslash \operatorname{Col}(\beta)\neq \varnothing.
    \]
    Then for any $Y\in \mathcal{X}_n$,
    \[
    \lim_{\epsilon\to 0_+}\mathfrak{b}^{(k)}_{\Pi;\alpha,X}(\beta+\epsilon Y) = 0.
    \]
\end{lem}
\begin{proof}
    After an $SL(n,\mathbb{R})$-action, assume
    \[
    \operatorname{Col}(\Pi) = \mathrm{span}(\mathbf{e}_{1},\dots,\mathbf{e}_{n-k}),\quad
    \operatorname{Col}(\alpha) = \mathrm{span}\{\mathbf e_i : i\in\mathcal{A}\}, \quad
    \operatorname{Col}(\beta)  = \mathrm{span}\{\mathbf e_j : j\in\mathcal{B}\},
    \]
    with $\mathcal A\subset\mathcal B$ but $\{1,\dots,n-k\}\setminus\mathcal B\neq\varnothing$.

    Writing
    \[
        (\beta+\epsilon Y)^{-1} = M_{-1}\epsilon^{-1} + M_0 + O(\epsilon),
    \]
    one sees that $M_{-1}$ vanishes on the columns indexed by $\mathcal{B}$. Since $\operatorname{Col}(\alpha)\subset \operatorname{Col}(\beta)$,
    \[
    \mathrm{tr}((\beta+\epsilon Y)^{-1}\alpha) = O(1),
    \]
    with a positive $\epsilon^0$ coefficient.

    Meanwhile, let $\iota_\Pi = (\mathbf{e}_{1},\dots,\mathbf{e}_{(n-k)})$. Because $\operatorname{Col}(\Pi)\backslash \operatorname{Col}(\beta)\neq \varnothing$, the principal $(n-k)\times(n-k)$-minor of $M_{-1}$ is nonzero and semi-positive definite, and the minor of $M_0$ is positive definite. Thus
    \[
    \det(\iota_\Pi^{\mathsf{T}}(\beta+\epsilon Y)^{-1}\iota_\Pi) = o(1).
    \]
    Therefore
    \[
    \mathfrak{b}^{(k)}_{\Pi;\alpha,X}(\beta+\epsilon Y)\varpropto\mathrm{tr}((\beta+\epsilon Y)^{-1}\alpha)\det (\iota_\Pi^{\mathsf{T}}(\beta+\epsilon Y)^{-1}\iota_\Pi)^{-1/(n-k)}\xrightarrow{\epsilon\to 0_+} 0,
    \]
    as desired.
\end{proof}
Two further asymptotic phenomena reveal the rich nature of Busemann-Selberg functions. The first arises when the point approaches the star of the corresponding boundary component.
\begin{lem}\label{lem:3:6}
    Let $\Pi\leq \Xi$ be boundary components of types $n-k\leq n-l$ in $\overline{\mathcal{X}_n}$, and let 
    \[
    \pi: \mathcal{X}_n\sqcup\mathrm{st}(\Xi)\to\mathcal{X}_{n-l}
    \]
    be the canonical projection. Pick $\alpha\in \partial \Pi$, $\beta\in \Xi$, and fix $X\in \mathcal{X}_n$. Then for each $Y\in\mathcal{X}_n$,
    \[
    \lim_{\epsilon\to 0_+}\mathfrak{b}^{(k)}_{\Pi;\alpha,X}(\beta+\epsilon Y) = \mathfrak{b}^{(k-l)}_{\pi(\Pi);\pi(\alpha),\pi(X^{-1})^{-1}}(\pi(\beta)).
    \]
    In particular, if $\Pi = \Xi$, the limit is a Busemann-Selberg function of type $0$.
\end{lem}
\begin{proof}
    Denote by
    \[
    \pi_1: \pi(\mathcal{X}_n\sqcup \mathrm{st}(\Pi)) = \mathcal{X}_{n-l}\sqcup\mathrm{st}(\pi(\Pi))\to \mathcal{X}_{n-k}
    \]
    the canonical projection, so that $\pi_1\circ\pi$ projects $\mathcal{X}_n\sqcup\mathrm{st}(\Pi)$ to $\mathcal{X}_{n-k}$. We first show
    \[
    \lim_{\epsilon\to 0_+}\pi((\beta+\epsilon Y)^{-1}) = \pi(\beta)^{-1}.
    \]
    Under the $SL(n,\mathbb{R})$-action, we assume
    \[
    \Xi = \partial_S(\mathbf{e}_{1},\dots,\mathbf{e}_{n-l}),\quad \beta = \mathrm{diag}(\beta_1,O),\quad Y = \left(\begin{array}{cc}
        Y_1 & Y_2 \\
        Y_2^{\mathsf{T}} & Y_3
    \end{array}\right),
    \]
    with $\beta_1,Y_1\in\mathrm{Sym}_{n-l}(\mathbb{R})$, $Y_3\in\mathrm{Sym}_{l}(\mathbb{R})$. Then
    \[
    (\beta+\epsilon Y)^{-1} = \left(\begin{array}{cc}
        \beta_1+\epsilon Y_1 & \epsilon  Y_2 \\
        \epsilon Y_2^{\mathsf{T}} & \epsilon Y_3
    \end{array}\right)^{-1}=\left(\begin{array}{cc}
        \beta_1^{-1}+O(\epsilon) & -\beta_1^{-1}Y_2Y_3^{-1}+O(\epsilon) \\
        -Y_3^{-1}Y_2^{\mathsf{T}}\beta_1^{-1}+O(\epsilon) & \epsilon^{-1} Y_3^{-1}+O(1)
    \end{array}\right).
    \]
    Hence,
    \[
    \lim_{\epsilon\to 0_+}\pi((\beta+\epsilon Y)^{-1}) = \frac{\beta_1^{-1}}{\det(\beta_1^{-1})^{1/(n-l)}} = \pi(\beta)^{-1}.
    \]
    It follows by continuity of trace and determinant that
    \[
    \begin{split}
        & \lim_{\epsilon\to 0_+}\mathfrak{b}^{(k)}_{\Pi;\alpha,X}(\beta+\epsilon Y) = \lim_{\epsilon\to 0_+}\frac{\mathrm{tr}(\pi_1(\pi((\beta+\epsilon Y)^{-1}))\pi_1(\pi(\alpha)))}{\mathrm{tr}(\pi_1(\pi(X^{-1}))\pi_1(\pi(\alpha)))} \\
        & = \lim_{\epsilon\to 0_+}\frac{\mathrm{tr}(\pi_1(\pi(\beta)^{-1})\pi_1(\pi(\alpha)))}{\mathrm{tr}(\pi_1(\pi(X^{-1}))\pi_1(\pi(\alpha)))} = \mathfrak{b}^{(k-l)}_{\pi(\Pi);\pi(\alpha),\pi(X^{-1})^{-1}}(\pi(\beta)).
    \end{split}
    \]
\end{proof}
\begin{exm}
    Let $\Pi = \partial_S(\mathbf{e}_1,\mathbf{e}_2)\subset \partial_S\mathcal{X}_3$, $X = I_3$, and $\alpha = \mathbf{e}_1\otimes \mathbf{e}_1$; let $X_0 = I_2$, and $\alpha_0 = \mathbf{e}_1\otimes \mathbf{e}_1\in \partial_\infty\mathbf{H}^2$. For each $\beta_0\in\mathcal{X}_2 = \mathbf{H}^2$ with $\beta = \mathrm{diag}(\beta_0,0)$, and for any $Y\in\mathcal{X}_3$,
    \[
    \lim_{\epsilon\to 0_+}\mathfrak{b}^{(1)}_{\Pi;\alpha,X}(\beta+\epsilon Y) = \mathfrak{b}_{\alpha_0,X_0}(\beta_0).
    \]
\end{exm}
In the second case, the Busemann-Selberg function diverges because its limit depends on the direction of approach to the boundary.
\begin{lem}\label{lem:3:7}
    Let $\Pi\leq\Xi$ be boundary components of types $n-k\leq n-l$ in $\overline{\mathcal{X}_n}$, and let
    \[
    \pi: \mathcal{X}_n\sqcup\mathrm{st}(\Xi)\to\mathcal{X}_{n-l}
    \]
    be the canonical projection. Let $\alpha\in \partial\Pi$, $\beta\in \partial_S \mathcal{X}_n$, and fix $X\in\mathcal{X}_n$ satisfying $\operatorname{Col}(\beta)\oplus \operatorname{Col}(\Xi) = \mathbb{R}^n$. Then for every $Y\in \mathcal{X}_n$,
    \[
    \lim_{\epsilon\to 0_+}\mathfrak{b}^{(k)}_{\Pi;\alpha,X}(\beta+\epsilon Y) = \mathfrak{b}_{\pi(\Pi);\pi(\alpha),\pi(X^{-1})^{-1}}^{(k-l)}(\pi(Y)).
    \]
    In particular, if $\Pi = \Xi$, the path limit is a Busemann-Selberg function of type $0$.
\end{lem}
\begin{proof}
    Conjugate so that
    \[
    \Xi = \partial_S(\mathbf{e}_{1},\dots,\mathbf{e}_{n-l}),\ \beta = \mathrm{diag}(O,\beta_3),\ Y = \left(\begin{array}{cc}
        Y_1 & Y_2 \\
        Y_2^{\mathsf{T}} & Y_3
    \end{array}\right),
    \]
    where $\beta_3,Y_3\in GL(l,\mathbb{R})$ and $Y_1\in GL(n-l,\mathbb{R})$. The block-matrix inversion shows
    \[
    (\beta+\epsilon Y)^{-1} = \left(\begin{array}{cc}
        \epsilon Y_1 & \epsilon  Y_2 \\
        \epsilon Y_2^{\mathsf{T}} & \beta_3+\epsilon Y_3
    \end{array}\right)^{-1}=\left(\begin{array}{cc}
        \epsilon^{-1}Y_1^{-1} + O(1) & -Y_1^{-1}Y_2\beta_3^{-1}+O(\epsilon)\\ -Y_1^{-1}Y_2^{\mathsf{T}}\beta_3^{-1}+O(\epsilon) & \beta_3^{-1}+O(\epsilon)
    \end{array}\right).
    \]
    Hence
    \[
    \lim_{\epsilon\to 0_+}\pi((\beta + \epsilon Y)^{-1}) = \pi(Y)^{-1}.
    \]
    The remainder of the argument follows exactly as in Lemma \ref{lem:3:6}, by continuity of trace and determinant in the definition of $\mathfrak{b}^{(k)}_{\Pi;\alpha,X}$.
\end{proof}
\begin{exm}
    Let $\Pi = \partial_S(\mathbf{e}_1,\mathbf{e}_2)\subset \overline{\mathcal{X}_3}$, $X = I_3$, $\alpha = \mathbf{e}_1\otimes \mathbf{e}_1$, and $\beta = \mathbf{e}_3\otimes \mathbf{e}_3$. Let $X_0 = I_2$ and $\alpha_0 = \mathbf{e}_1\otimes \mathbf{e}_1\in \overline{\mathbf{H}^2}$. Then for any $Y\in\mathcal{X}_3$, with $Y_0\in\mathbf{H}^2$ being its projection to the first two rows and columns, we have
    \[
    \lim_{\epsilon\to 0_+}\mathfrak{b}^{(1)}_{\Pi;\alpha,X}(\beta+\epsilon Y) = \mathfrak{b}_{\alpha_0,X_0}(Y_0).
    \]
\end{exm}
To conclude, we summarize the behavior of $\mathfrak{b}_{\Pi;\alpha,X}^{(k)}$ to the Satake boundary.
\begin{table}[H]
    \centering
    \resizebox{\columnwidth}{!}{
    \begin{tabular}{c|c|c|c|c}
        \hline
        \multirow{2}{*}{Conditions} & \multicolumn{2}{c|}{$\operatorname{Col}(\alpha)\subseteq \operatorname{Col}(\beta)$} & \multicolumn{2}{c}{$\operatorname{Col}(\alpha)\backslash \operatorname{Col}(\beta)\neq\varnothing$}\\
        \cline{2-5}
        & $\operatorname{Col}(\Pi)\backslash \operatorname{Col}(\beta)\neq\varnothing$ & $\operatorname{Col}(\Pi)\subseteq \operatorname{Col}(\beta)$ & $\operatorname{Col}(\beta)\cap\operatorname{Col}(\Pi)=\varnothing$ & $\operatorname{Col}(\beta)\cap\operatorname{Col}(\Pi)\neq\varnothing$\\
        \hline
        \begin{tabular}[c]{@{}c@{}}$\lim_{\epsilon\to 0_+}\mathfrak{b}^{(k)}_{\Pi;\alpha,X}$\\ $(\beta+\epsilon Y)$\end{tabular} & $0$ & $\mathfrak{b}_{\pi(\alpha),\pi(X^{-1})^{-1}}(\pi(\beta))$ & $\mathfrak{b}_{\pi(\alpha),\pi(X^{-1})^{-1}}(\pi(Y))$ & $\infty$ \\
        \hline
    \end{tabular}
    }
\end{table}
% \begin{table}[H]
%     \centering
%     \resizebox{\columnwidth}{!}{
%     \begin{tabular}{c|c|c|c|c}
%         \hline
%         \multirow{2}{*}{Conditions} & \multicolumn{2}{c|}{$\operatorname{Nul}(\beta)\subset \operatorname{Nul}(\alpha)$} & \multicolumn{2}{c}{$\operatorname{Nul}(\beta)\backslash \operatorname{Nul}(\alpha)\neq\varnothing$}\\
%         \cline{2-5}
%         & $\operatorname{Nul}(\beta)\backslash \operatorname{Nul}(\Pi)\neq\varnothing$ & $\operatorname{Nul}(\beta)\subset \operatorname{Nul}(\Pi)$ & \begin{tabular}[c]{@{}c@{}}$\mathrm{span}(\operatorname{Nul}(\beta),\operatorname{Nul}(\Pi))$\\ $=\mathbb{R}^n$\end{tabular} & \begin{tabular}[c]{@{}c@{}}$\mathrm{span}(\operatorname{Nul}(\beta),\operatorname{Nul}(\Pi))$\\ $\neq\mathbb{R}^n$\end{tabular}\\
%         \hline
%         \begin{tabular}[c]{@{}c@{}}$\lim_{\epsilon\to 0_+}\mathfrak{b}^{(k)}_{\Pi;\alpha,X}$\\ $(\beta+\epsilon Y)$\end{tabular} & $0$ & $\mathfrak{b}_{\pi(\alpha),\pi(X^{-1})^{-1}}(\pi(\beta))$ & $\mathfrak{b}_{\pi(\alpha),\pi(X^{-1})^{-1}}(\pi(Y))$ & $\infty$ \\
%         \hline
%     \end{tabular}
%     }
% \end{table}
\subsection{Finite Volume Convex Polytopes in \texorpdfstring{$\mathcal{X}_n$}{Lg}}
A \textbf{convex polytope} $D\subset\mathcal X_n$ is by definition the intersection of finitely many affine half-spaces in $\mathrm{Sym}_n(\mathbb{R})$ with the hypersurface $\mathcal{X}_{n,\mathrm{hyp}}$. Equivalently, one may view
\[
D = \mathbf{D}\cap \mathcal{X}_{n,\mathrm{proj}},
\]
where $\mathbf D\subset\mathbf{P}(\mathrm{Sym}_n(\mathbb{R}))$ is a projective convex polytope with finitely many faces.

In Proposition \ref{lem:append:2} in the Appendix, we show that $D$ has finite volume (with respect to the Riemannian metric on $\mathcal{X}_n$) if and only if its corresponding projective polytope $\mathbf{D}$ lies entirely inside the Satake compactification~$\overline{\mathcal X_n}$. We therefore adopt the following equivalent criterion:
\begin{defn}\label{defn:3:1}
    A convex polytope $D\subset\mathcal{X}_n$ is said to have \textbf{finite volume} if there exists a projective polytope $\mathbf{D}\subset\overline{\mathcal X_n}\subset \mathbf{P}(\mathrm{Sym}_n(\mathbb{R}))$ such that
    \[
        D = \mathbf{D}\cap \mathcal{X}_n
    \]
    In this case, $\mathbf{D}$ is the \textbf{Satake compactification} of~$D$, denoted $\overline D=\mathbf D$.
    
    The \textbf{Satake boundary} of $D$ is then
    \[
    \partial_S D = \overline{D}\cap \partial_S D.
    \]
\end{defn}
Since $\partial_S\mathcal{X}_n$ decomposes into boundary components indexed by subspaces of $\mathbb{R}^n$, the same holds for $\partial_S D$:
\begin{defn}
    Let $D\subset\mathcal{X}_n$ be a finitely-sided convex polytope of finite volume. For each linear subspace $V\subset \mathbb{R}^n$, define the \textbf{Satake boundary component}
    \[
    \Phi_V = \partial_S D\cap \partial_S(V),
    \]
    where $\partial_S(V)$ is the corresponding component of $\partial_S\mathcal{X}_n$. The integer $k = \dim V$ is called the \textbf{type} of $\Phi_V$.
\end{defn}
It is immediate that the closure of a boundary component decomposes into smaller strata:
\[
\overline{\Phi_V} = \bigsqcup_{W\subset V} \Phi_W.
\]
Furthermore, the Satake boundary of any finite-volume, finitely-sided polytope admits a natural combinatorial description:
\begin{prop}
    Let \(D\subset\mathcal X_n\) be a finitely-sided convex polytope of finite volume, with Satake compactification \(\overline D\). Then for each nonempty boundary component \(\Phi_V\subset\partial_S D\), its closure \(\overline{\Phi_V}\) is a face of the projective polytope \(\overline D\).
\end{prop}
\begin{proof}
    Write
    \[
    \overline{D} = \mathrm{conv}\{\alpha_1,\dots,\alpha_m\},
    \]
    where each vertex $\alpha_i\in \overline{\mathcal{X}_n}$. Let $V = \subset \mathbb{R}^n$ be a subspace such that $\Phi_V = \partial_S D\cap \partial_S(V)$ is non-empty, and
    \[
    I = \{i\mid \alpha_i\in \overline{\partial_S(V)}\}.
    \]
    We claim:
    \begin{itemize}
        \item The convex hull $\mathrm{conv}(\{\alpha_i\}_{i\in I})$ is a face of $\overline{D}$.
        \item This convex hull coincides with $\overline{\Phi_V}$.
    \end{itemize}
    To prove the first claim, note that \(\alpha\in\overline{\partial_S(V)}\) if and only if the associated bilinear form vanishes on \(V^\perp\). For each \(j\notin I\), the kernel of \(\alpha_j\) in \(V^\perp\) is a proper Zariski-closed subset, so we can choose \(\mathbf u\in V^\perp\) and \(\epsilon>0\) such that
    \[
        \mathbf{u}^{\mathsf T}\alpha_i\,\mathbf{u} = 0 \quad (i\in I),
        \quad\text{and}\quad
        \mathbf{u}^{\mathsf T}\alpha_j\,\mathbf{u} \ge \epsilon \quad (j\notin I).
    \] 
    The hyperplane $\{\alpha\mid \mathbf{u}^{\mathsf T}\alpha\,\mathbf{u} = 0\}$ separates $\mathrm{conv}(\{\alpha_i\}_{i\in I})$ from $\mathrm{conv}(\{\alpha_j\}_{j\notin I})$, proving that the former is indeed a face.

    For the second claim, observe that $\overline{\Phi_V} = \overline{\partial_S(V)}\cap \overline{D}$ is convex and contains all \(\alpha_i\) with \(i\in I\), so \(\mathrm{conv}(\{\alpha_i\}_{i\in I})\subseteq\overline{\Phi_V}\). Conversely, any point of \(\overline{\Phi_V}\) is a convex combination of the vertices \(\alpha_1,\dots,\alpha_m\), but the above separation also applies to $\overline{\Phi_V}$, implying that vertices with \(j\notin I\) cannot appear in such combinations.  Therefore $\overline{\Phi_V} = \mathrm{conv}(\{\alpha_i\}_{i\in I})$, as claimed.
\end{proof}
From this it follows:
\begin{cor}\label{col:3:1}
    Let \(D\subset\mathcal X_n\) be a finitely-sided, finite-volume convex polytope. Then:
    \begin{itemize}
        \item There are only finitely many subspaces \(V\subset\mathbb{R}^n\) for which \(\Phi_V\neq\varnothing\). Equivalently,
        \[
        \partial_S D = \bigsqcup_{V\in\mathcal{V}} \Phi_V,
        \]
        is a finite disjoint union.
        \item For each nonempty boundary component $\Phi_V$ of type $k = \dim(V)$, its image under the canonical identification $\pi_V: \partial_S(V)\to \mathcal{X}_k$ is again a finitely-sided convex polytope of finite volume in~\(\mathcal X_k\).
        \item The Satake boundary of \(\pi_V(\Phi_V)\) decomposes as
        \[
        \partial_S\pi_V(\Phi_V) = \pi_V\left(\bigsqcup_{W\in\mathcal{V},\ W\subsetneq V} \Phi_W\right).
        \]
    \end{itemize}
\end{cor}
We call any face of a boundary component \(\Phi\subset\partial_S D\) (including \(\Phi\) itself) a \textbf{Satake face} of \(D\), and denote the set of all Satake faces by \(\mathcal F_S(D)\).
\vspace{12pt}
\section{Preliminary Lemmas for the Main Theorem}\label{Sec:4}
Let $D$ be an exact hyperbolic Dirichlet domain satisfying the tiling condition, and let $M=D/\sim$ be the associated quotient manifold (or orbifold) by gluing up the facets. The completeness of $M$ follows from two facts:
\begin{itemize}
    \item Balls centered in the \textbf{thick part} of $M$ are compact up to the injectivity radius.
    \item Lemma~\ref{lem:1:1} implies that the \textbf{thin part} of $M$ consists solely of cusps, which likewise admit compact neighborhoods.
\end{itemize}
In this section, we show that an analogous structure holds for the thin part of a finite-volume Dirichlet-Selberg quotient in~$\mathcal X_n$.
\subsection{Tangency of Horospheres to the Satake Boundary}
In hyperbolic space, any horosphere based at an ideal point $a\in\mathbf{H}^n$ meets the visual boundary only at~$a$, and does so tangentially. A similar tangency phenomenon occurs for horospheres in~$\mathcal X_n$, with additional cases to consider.
\begin{prop}\label{prop:4:1}
    Let $\alpha\in\partial_S\mathcal X_n$ lie in the boundary component~$\Pi$, and fix $r>0$. Denote the closed horoball and its boundary by
    \[
        B = B(\alpha,r), \quad \Sigma = \Sigma(\alpha,r), \quad \overline\Sigma = \partial\overline{B}.
    \]
    Then:
    \begin{itemize}
        \item The horosphere meets the Satake boundary exactly along the closure of the star of~$\Pi$:
        \[
        \overline{\Sigma}\cap \partial_S\mathcal{X}_n = \overline{\mathrm{st}(\Pi)}.
        \]
        \item For each Satake point $\beta\in \mathrm{st}(\Pi)$, the hypersurfaces $\overline{\Sigma}$ and $\partial_S\mathcal{X}_n$ are tangent at $\beta$.
    \end{itemize}
\end{prop}
\begin{proof}
    \textbf{Boundary contact}. Let $\beta\in\partial_S\mathcal X_n$. By the asymptotic lemmas of Subsection~\ref{subsec:3:4}:
    \begin{itemize}
        \item If $\beta\in\mathrm{st}(\Pi)$, then $\operatorname{Col}(\beta)\supseteq \operatorname{Col}(\alpha)$, so 
        $\mathfrak b_{\alpha}(Y)\to0$ as $Y\to\beta$.  Hence $\beta\in\overline{B(\alpha,r)}$.
        \item If $\beta\notin\overline{\mathrm{st}(\Pi)}$, then in any neighborhood of $\beta$, $\mathfrak b_{\alpha}(Y)\to+\infty$ along any approach direction, so $\beta\notin\overline{B(\alpha,r)}$.
    \end{itemize}
    This shows
    \[
    \overline{\Sigma}\cap \partial_S\mathcal{X}_n = \overline{B}\cap \partial_S\mathcal{X}_n = \overline{\mathrm{st}(\Pi)}.
    \]
    \textbf{Tangency}. Fix $\beta\in\mathrm{st}(\Pi)$. Conjugate so that
    \[
    \beta = \begin{pmatrix}\beta_0 & 0\\0 & 0\end{pmatrix},
    \quad
    \beta_0\in GL({n-l},\mathbb{R}).
    \]
    Write a general tangent direction at $\beta$ in projective coordinates as
    \[
        A = \begin{pmatrix}A_1 & A_2^{\mathsf T}\\A_2&A_3\end{pmatrix},\quad A_3\in \mathrm{Mat}_l(\mathbb{R}).
    \]
    With reference point $X=I_n$, the horosphere $\Sigma(\alpha,r)$ is cut out (in projective space) by
    \[
        f(Y) = \bigl(\mathrm{tr}(\alpha Y)\bigr)^n - r^n\bigl(\det Y\bigr)^{\,n-1}=0.
    \]
    Expanding $f(\beta+tA)$ for small $t$, the dominant term in  $r^n(\det(\beta + tA))^{n-1}$ is 
    \[
        r^n(\det(\beta_0)\det(A_3))^{n-1}t^{(n-1)l},
    \]
    while $\bigl(\mathrm{tr}(\alpha(\beta+tA))\bigr)^n$ has strictly higher order in~$t$. Thus $A$ lies in the tangent cone to $\overline\Sigma$ if and only if
    \[
        \det(A_3)=0.
    \]
    On the other hand, the Satake boundary $\partial_S\mathcal X_n$ in projective coordinates is the hypersurface $\det Y=0$. Its linearization at $\beta$ likewise vanishes on exactly those $A$ for which $\det(A_3)=0$.
    
    Hence at each $\beta\in\mathrm{st}(\Pi)$, the two hypersurfaces $\overline\Sigma$ and $\partial_S\mathcal X_n$ share the same tangent cone, proving they are tangent.
\end{proof}
We have a more generalized tangency property for higher-type cases.
\begin{prop}\label{prop:4:2}
    Let $\Xi\subset\overline{\mathcal X_n}$ be a boundary component of type $n-k$, and let $\Pi<\Xi$ be a smaller boundary component containing a Satake point $\alpha$. For each $r>0$, denote the $k$-th horosphere and its boundary by
    \[
        B = B^{(k)}_{\Xi}(\alpha,r), \quad \Sigma = \Sigma^{(k)}_{\Xi}(\alpha,r), \quad \overline\Sigma = \partial\overline{B}.
    \]
    Then:
    \begin{itemize}
        \item The horosphere meets the Satake boundary at:
        \[
        \overline\Sigma \cap\partial_S\mathcal X_n = \overline{\mathrm{st}(\Pi) \setminus \Bigl(\!\bigsqcup_{\Xi_0\ge\Xi}\!\bigl(\Xi_0\setminus B^{(k-l)}_{\Xi}(\alpha,r)\bigr)\Bigr)},
        \]
        where each $\Xi_0\ge\Xi$ is a boundary component of type $n-l$ and $B^{(k-l)}_{\Xi}(\alpha,r)$ is the corresponding $(k-l)$-th horoball.
        \item For each Satake point 
        \[
            \beta\in \mathrm{st}(\Pi) \setminus \Bigl(\!\bigsqcup_{\Xi_0\ge\Xi}\!\bigl(\Xi_0\setminus B^{(k-l)}_{\Xi}(\alpha,r)\bigr)\Bigr),
        \]
        the hypersurfaces $\overline\Sigma$ and $\partial_S\mathcal X_n$ are tangent at~$\beta$.
    \end{itemize}
\end{prop}
\begin{proof}
    \textbf{Boundary contact}. By the asymptotic lemmas of Subsection~\ref{subsec:3:4}, any Satake point $\beta$ falls into exactly one of the following cases, determining whether $\beta\in\overline B$:
    \begin{itemize}
        \item If $\beta\in\mathrm{st}(\Pi)\setminus\mathrm{st}(\Xi)$, then $\operatorname{Col}(\beta)\supseteq\operatorname{Col}(\alpha)$ and $\operatorname{Col}(\Xi)\not\subset\operatorname{Col}(\beta)$, so $\mathfrak b^{(k)}_{\Xi;\alpha}(Y)\to0$ as $Y\to\beta$, and thus $\beta\in B$.
        \item If $\beta\in\mathrm{st}(\Xi)$, then $\operatorname{Col}(\beta)\supseteq\operatorname{Col}(\Xi)$ and $\mathfrak b^{(k)}_{\Xi;\alpha}(Y)\to\mathfrak b^{(k-l)}_{\pi(\Xi);\pi(\alpha)}(\pi(\beta))$ as $Y\to\beta$, so $\beta\in B$ precisely when that limit is $\leq r$.
        \item If $\beta$ is not in the closure of previous cases, $\mathfrak b^{(k)}_{\Xi;\alpha}(Y)\to+\infty$ along any approach, so $\beta\notin\overline B$.
    \end{itemize}
    To see tangency, fix any such $\beta\in\overline\Sigma\cap\partial_S\mathcal X_n$. Similar to the previous lemma, a tangent vector $A\in T_\beta\mathbf{P}(\mathrm{Sym}_n(\mathbb{R}))$ lies in the tangent cone of $\overline\Sigma$ if and only if it lies in that of either the equation $ \{Y\mid \det(\pi_{\Xi}(Y))=0\}$ or the boundary defining inequality of $\mathcal X_n$. But the latter hypersurface entirely contains $\mathcal{X}_n$, making the two tangent cones coincide. This proves the tangency of $\overline{\Sigma}$ and $\partial_S\mathcal{X}_n$.
\end{proof} 
\subsection{Satake Face Cycles}\label{subsec:4:2}
In hyperbolic geometry, a finite-volume manifold $M$ is complete precisely when each cusp link $L[a]$ is a Euclidean \textbf{isometry} manifold, i.e.\ the holonomy similarity transformation of every generator of $\pi_1(L[a])$ lies in the Euclidean isometry group \cite{ratcliffe1994foundations, goldman2022geometric}. This condition is satisfied by Dirichlet domain quotients, where each ideal cycle preserves the corresponding Busemann function $b_a$ \cite{kapovich2023geometric}.

We generalize this to $\mathcal{X}_n$ by defining \textbf{cycles} of Satake faces and proving they preserve Busemann-Selberg functions.
\begin{defn}
    Let $D\subset\mathcal X_n$ be a finite-volume, finitely-sided polytope.  Denote by \(\mathcal F(D)\) its set of (ordinary) faces, and by \(\mathcal F_S(D)\) its set of Satake faces, each Satake face \(\Phi\) lying in a boundary component~\(\Pi\).
    \begin{itemize}
        \item We say a Satake face \(\Phi\in\mathcal F_S(D)\) is \textbf{incident with} a face \(F\in\mathcal F(D)\) if \(\Phi\subset\overline F\).
        \item More precisely, the pair \((\Phi,\Pi)\) is \textbf{incident with} \(F\) if \(\overline\Phi\subseteq \overline F\cap \overline{\Pi}\), and it is \emph{precisely incident} if \(\overline\Phi = \overline F\cap \overline{\Pi}\).
        \item A \textbf{pairing} of two Satake faces \(\Phi,\Phi'\in\mathcal F_S(D)\) is given by a facet-pairing isometry \(g_F\) so that
        \[
        \Phi\subset\overline F, \quad \Phi'\subset\overline{F'}\quad\bigl(F'=g_F^{-1}F\bigr), \quad g_F^{-1}.\Phi =\Phi'.
        \]
        We write \([\Phi]\) for the equivalence class of \(\Phi\) under such pairings.
        \item A \textbf{cycle} of the Satake face \(\Phi\) is a finite sequence \(\{\Phi_0,\Phi_1,\dots,\Phi_m\}\) of faces in \([\Phi]\) with \(\Phi_0=\Phi_m=\Phi\), and isometries \(g_i\) so that \(\Phi_i=g_i.\Phi_{i-1}\) for \(i=1,\dots,m\). \ The product
        \[
            w \;=\; g_1\,g_2\cdots g_m\in SL(n,\mathbb{R})
        \]
        is called the \textbf{word} of the cycle.
    \end{itemize}
\end{defn}
Below is our generalized preservation property for usual Busemann-Selberg functions under Satake face cycles. 
\begin{prop}\label{prop:4:3}
    Let \(D = DS(X,\Gamma_0)\subset\mathcal X_n\) be a Dirichlet-Selberg domain satisfying the hypotheses of Theorem~\ref{thm:1:2}, and let \(\Phi\) be a Satake face of type \(n-k\). Suppose \(\{\Phi_0,\Phi_1,\dots,\Phi_m\}\) is a cycle of \(\Phi\) with associated word 
    \[
        w = g_1g_2\cdots g_m\in SL(n,\mathbb{R}).
    \]
    Then:
    \begin{itemize}
        \item The action of \(w\) on the boundary component \(\mathrm{span}(\Phi)\) has finite order.
        \item There exists a Satake point \(\alpha_\Phi\) in the relative interior of \(\Phi\) such that \(w.\alpha_\Phi = \alpha_\Phi\).
        \item For every \(Y,Z\in\mathcal X_n\),
        \[
            \mathfrak{b}_{\alpha_{\Phi},Z}(Y) = \mathfrak{b}_{\alpha_{\Phi},Z}(w.Y).
        \]
    \end{itemize}
\end{prop}
Proposition~\ref{prop:4:3} rests on the following equivariance lemma.
\begin{lem}\label{lem:4:1}
    Let $g\in SL(n,\mathbb{R})$, fix $X\in\mathcal{X}_n$, and let $\alpha\in \partial_S \mathrm{Bis}(X,g^{-1}.X)$. Then:
    \begin{enumerate}
    \item $\mathrm{tr}(X^{-1}\alpha) = \mathrm{tr}(X^{-1}(g.\alpha))$.
    \item For all $Y\in\mathcal{X}_n$,
    \[
      \mathfrak b_{\alpha,X}(Y) = \mathfrak b_{g.\alpha,X}\bigl(g. Y\bigr).
    \]
    \end{enumerate}
\end{lem}
\begin{proof}
    Since \(\alpha\) lies in the Satake boundary of the bisector \(\mathrm{Bis}(X,g^{-1}.X)\), one has
    \[
        \mathrm{tr}(X^{-1}\alpha) = \mathrm{tr}((g^{-1}.X)^{-1}\alpha) = \mathrm{tr}(gX^{-1}g^\mathrm{T}\alpha) = \mathrm{tr}(X^{-1}(g.\alpha)),
    \]
    proving (1).

    For (2), note
    \[
    \mathrm{tr}((g.Y)^{-1}(g.\alpha)) = \mathrm{tr}(g^{-1}Y^{-1}(g^{-1})^\mathrm{T}g^{\mathsf{T}}\alpha g) = \mathrm{tr}(g^{-1}Y^{-1}\alpha g) = \mathrm{tr}(Y^{-1}\alpha).
    \]
    Hence
    \[
    \mathfrak{b}_{\alpha,X}(Y) = \frac{\mathrm{tr}(Y^{-1}(\alpha))}{\mathrm{tr}(X^{-1}(\alpha))} = \frac{\mathrm{tr}((g.Y)^{-1}(g.\alpha))}{\mathrm{tr}(X^{-1}(g.\alpha))} = \mathfrak{b}_{g.\alpha,X}(g.Y).
    \]
\end{proof}
\begin{proof}[Proof of Proposition \ref{prop:4:3}]
    Let \(\{\Phi_0,\Phi_1,\dots,\Phi_m\}\) be a cycle of the Satake face \(\Phi\), with \(\Phi_i = g_i\cdot\Phi_{i-1}\) for \(i=1,\dots,m\) and \(\Phi_0=\Phi_m=\Phi\). For any interior point \(\xi\in\mathrm{span}(\Phi)\), set \(\xi_0=\xi\), \(\xi_i=g_i\cdot\xi_{i-1}\) for \(i=1,\dots,m\).

    Since each \(g_i\) pairs facets of the Dirichlet-Selberg domain, we have 
    \(\xi_{i-1}\in \Phi_{i-1}\subset \overline{\mathrm{Bis}(X, g_i^{-1}.X)}\). By Lemma~\ref{lem:4:1},
    \[
        \mathrm{tr}(X^{-1}\xi_{i-1}) = \mathrm{tr}(X^{-1}(g_i.\xi_{i-1})) = \mathrm{tr}(X^{-1}\xi_i).
    \]
    Iterating gives
    \begin{equation}\label{equ:4:1}
    \mathrm{tr}(X^{-1}\xi) = \mathrm{tr}(X^{-1}(w.\xi)),\quad
    w=g_1\cdots g_m.
    \end{equation}
    \textbf{Finite-order on the boundary component}. Conjugate so that $\mathrm{span}(\Phi) = \partial_S(\mathbf{e}_{1},\dots,\mathbf{e}_{n-k})$, and let $\pi: \mathcal{X}_n\sqcup \mathrm{st}(\mathrm{span}(\Phi))\to \mathcal{X}_{n-k}$ be the projection dropping the last $k$ coordinates (with determinant normalization). Then $w$ preserves $\mathrm{span}(\Phi)$, and its restriction $\pi(w)\in GL(n-k,\mathbb{R})$ is a nonzero multiple of an $\mathcal{X}_{n-k}$-isometry.

    Define
    \[
        s:\mathrm{span}(\Phi)\to \mathbb{R},\quad  s(\xi) = \frac{\mathrm{tr}(X^{-1}\xi)}{\det(\iota_\Phi^{\mathsf{T}}\xi \iota_\Phi)^{1/(n-k)}},
    \]
    where $\iota_\Phi$ is the \(n\times(n-k)\) matrix selecting the first \(n-k\) coordinates. Then by \eqref{equ:4:1},    
    \[
    \begin{split}
        & s(w.\xi) = \frac{\mathrm{tr}(X^{-1}(w.\xi))}{\det(\pi(w).(W^{\mathsf{T}}\xi W))^{1/(n-k)}} \\
        & = \frac{\mathrm{tr}(X^{-1}\xi)}{\lvert\det(\pi(w))\rvert^{2/(n-k)}\det(W^{\mathsf{T}}\xi W)^{1/(n-k)}} = \frac{s(\xi)}{\lvert\det(\pi(w))\rvert^{2/(n-k)}}.
    \end{split}
    \]
    On the other hand, $s$ attains a unique minimum at $\alpha = \mathrm{diag}(\pi(X^{-1})^{-1},O_k)$. Uniqueness forces $w.\alpha = \alpha$ and $\lvert\det(\pi(w))\rvert = 1$. Hence \(\pi(w)\) lies in the compact subgroup \(\mathrm{O}(n-k)\) and, because it preserves the polytope \(\pi(\Phi)\), has finite order.
    % \[
    % s(\xi) = \det(W^{\mathsf{T}}X^{-1}W)^{1/(n-k)}s(\pi(X^{-1})^{-1},\pi(\xi)),
    % \]
    % thus it has a unique minimum at
    % \[
    % \alpha: = diag(\pi(X^{-1})^{-1},O).
    % \]

    \textbf{Existence of a fixed Satake point}. If \((\pi(w))^l = I_{n-k}\) for some \(l>0\), then the barycenter of the orbit \(\{\xi,\pi(w)\xi,\dots,\pi(w)^{l-1}\xi\}\) is a \(\pi(w)\)-fixed point in the interior of \(\Phi\). Lifted back to \(\mathcal X_n\), this yields the desired \(\alpha_\Phi\).

    \textbf{Preservation of the Busemann-Selberg function}. Write \(\alpha_0=\alpha_\Phi\) and \(\alpha_i=g_i.\alpha_{i-1}\). By Lemma~\ref{lem:4:1}, for every \(Y\in\mathcal X_n\),
    \[
    \mathfrak{b}_{\alpha_{i-1},X}(Y) = \mathfrak{b}_{g_i.\alpha_{i-1},X}(g_i.X) = \mathfrak{b}_{\alpha_i,X}(g_i.Y).
    \]
    Iterating from $i=1$ to $m$ and using $\alpha_m = \alpha_0$ gives
    \[
    \mathfrak{b}_{\alpha_{\Phi},X}(Y) = \mathfrak{b}_{\alpha_{\Phi},X}(w.Y),
    \]
    and replacing $X$ by any $Z\in \mathcal{X}_n$ preserves the equality.
\end{proof}
A similar preservation property holds for higher-type Busemann-Selberg functions as well.
\begin{prop}\label{prop:4:4}
    Let $D = DS(X,\Gamma_0)\subset\mathcal{X}_n$ be a Dirichlet-Selberg domain satisfying the hypotheses of Theorem \ref{thm:1:2}. Let $\Phi$ be a Satake face of type $n-k$, and $\Psi$ be a Satake face of type $n-l$ with $l<k$ such that $\Psi>\Phi$. Denote by $\Pi = \mathrm{span}(\Psi)$ the corresponding boundary component.
    
    If $w$ is the word of a common cycle of both $\Phi$ and $\Psi$, and if $\alpha_{\Phi}\in\Phi$ is a $w$-fixed interior point (cf. Proposition~\ref{prop:4:3}), then for all $Y,Z\in\mathcal{X}_n$,
    \[  
        \mathfrak{b}^{(l)}_{\Pi;\alpha_{\Phi},Z}(Y) = \mathfrak{b}^{(l)}_{\Pi;\alpha_{\Phi},Z}(w.Y).
    \]
\end{prop}
\begin{proof}
    Recall from \eqref{equ:3:2} that
    \[
    \mathfrak b^{(l)}_{\Pi;\alpha,Z}(Y)
    = \mathfrak b_{\alpha,Z}(Y)
      \det\!\bigl(\iota_\Pi^{\mathsf T}Y^{-1}\iota_\Pi\bigr)^{-1/(n-l)},
    \]
    where $\iota_\Pi$ is the \(n\times(n-l)\) matrix whose columns span \(\Pi\). By Proposition \ref{prop:4:3}, the usual Busemann-Selberg function \(\mathfrak b_{\alpha_\Phi,Z}\) is \(w\)-invariant:
    \[
    \mathfrak b_{\alpha_\Phi,Z}(Y) = \mathfrak b_{\alpha_\Phi,Z}(w\cdot Y).
    \]
    It remains to check
    \[
        \det(\iota_\Pi^{\mathsf{T}}Y^{-1}\iota_\Pi) = \det(\iota_\Pi^{\mathsf{T}}(w.Y)^{-1}\iota_\Pi).
    \]
    Since \(w\) preserves the boundary component \(\Psi\), its action on \(\iota_\Pi\) satisfies
    \[
    (w^{\mathsf{T}})^{-1}\iota_\Pi = \iota_\Pi w', w'\in GL(n-l,\mathbb{R}).
    \]
    In fact, matrices $W$ and $(w^{\mathsf{T}})^{-1}W$ represent boundary components $\Psi$ and $w^{-1}.\Psi$, which are assumed to be the same component. Therefore,
    Hence
    \[
    \begin{split}
        & \det(\iota_\Pi^{\mathsf{T}}(w.Y)^{-1}\iota_\Pi) = \det(((w^{\mathsf{T}})^{-1}\iota_\Pi)^{\mathsf{T}}Y^{-1}((w^{\mathsf{T}})^{-1}\iota_\Pi)) \\
        & = \det((\iota_\Pi w')^{\mathsf{T}}Y^{-1}(\iota_\Pi w')) = \det(w')^2\det(\iota_\Pi^{\mathsf{T}}Y^{-1}\iota_\Pi).
    \end{split}
    \]
    Finally, Proposition~\ref{prop:4:3} ensures that \(\pi_\Pi(w)\) has finite order, so \(\det(w')^2= 1\). Therefore the two determinants agree, and the \(l\)-th Busemann-Selberg function is \(w\)-invariant.
\end{proof}
\subsection{Riemannian Dihedral Angles in Dirichlet-Selberg Domains}\label{subsec:3:2}
For Dirichlet domains in hyperbolic spaces, a critical property is the independence of Riemannian dihedral angles from base point choices. While this fails for Dirichlet-Selberg domains in $\mathcal{X}_n$, understanding the dependence of this angle on the choice of base point is crucial for the proof of the main theorem.
% In this subsection, we prove a formula for the Riemannian dihedral angles between hyperplanes in $\mathcal{X}_n$.

As we defined earlier, a plane $P\subset \mathcal{X}_n$ of codimension $k$ is a non-empty intersection of $k$ linearly-independent hyperplanes. In addition, each of these hyperplanes is a perpendicular plane for an indefinite matrix $A\in \mathrm{Sym}_n(\mathbb{R})$. Therefore, the plane can be described as
\[
P = \left(\bigcap_{i=1}^{k}A_i^\perp\right) = \mathrm{span}(A_1,\dots,A_k)^\perp.
\]
In a Dirichlet-Selberg domain, a pair of adjacent faces of codimension $k$ spans two planes $P$ and $P'$ that intersect along $P\cap P'$ of codimension $k+1$. They can be described as
\begin{equation}\label{equ:3:1}
    P = \mathrm{span}(A_1,\dots,A_{k-1},B)^\perp,\ P' = \mathrm{span}(A_1,\dots,A_{k-1},B')^\perp,
\end{equation}
for linearly independent indefinite matrices $A_1,\dots, A_{k-1}, B$, and $B'\in \mathrm{Sym}_n(\mathbb{R})$.
\begin{lem}\label{lem:3:2}
    Let $P$ and $P'$ be planes described as in \eqref{equ:3:1}. Then, for any point $X\in P\cap P'$, the Riemannian dihedral angle $\angle_X(P,P')$ is given by:
    \[
    \angle_X(P,P') = \arccos\frac{\left(\bigwedge_{i=1}^{k-1}A_i\wedge B,\bigwedge_{i=1}^{k-1}A_i\wedge B'\right)_{X^{-1}}}{\sqrt{\left\lVert\bigwedge_{i=1}^{k-1}A_i\wedge B\right\rVert_{X^{-1}}\cdot\left\lVert\bigwedge_{i=1}^{k-1}A_i\wedge B'\right\rVert_{X^{-1}}}},
    \]
    where $(\cdot,\cdot)_{X^{-1}}$ denotes the inner product, and $||\cdot||_{X^{-1}}$ the norm, on the exterior algebra $\bigwedge^k(\mathrm{Sym}_n(\mathbb{R}))$ induced by the inner product on $\mathrm{Sym}_n(\mathbb{R})$:
    \[
    \langle A_1,A_2\rangle_{X^{-1}} = \mathrm{tr}(XA_1XA_2),\ \forall A_1,A_2\in \mathrm{Sym}_n(\mathbb{R}).
    \]
\end{lem}
\begin{proof}
    In the hypersurface model, the tangent space $T_XP$ is a subspace of $T_X\mathbb{R}^{n(n+1)/2}$:
    \[
    T_XP = \left\{C\in T_X\mathbb{R}^{n(n+1)/2}\left|\mathrm{tr}(A_iC) = 0,\ \mathrm{tr}(BC) = 0,\ \mathrm{tr}(X^{-1}C) = 0\right.\right\}.
    \]
    Similarly:
    \[
    T_XP' = \left\{C\in T_X\mathbb{R}^{n(n+1)/2}\left|\mathrm{tr}(A_iC) = 0,\ \mathrm{tr}(B'C) = 0,\ \mathrm{tr}(X^{-1}C) = 0\right.\right\}.
    \]
    Recall that the dihedral angles between linear subspaces of $T_X\mathbb{R}^{n(n+1)/2}$ are measured by the inner product given by the Killing form:
    \[
    \langle C,C'\rangle_X = \mathrm{tr}(X^{-1}CX^{-1}C').
    \]
    Thus, the dihedral angle between $T_XP$ and $T_XP'$ is equal to their orthogonal complements with respect to $\langle -,-\rangle_X$. These can be expressed explicitly in terms of bases:
    \[
    \begin{split}
        & (T_XP)^\perp = \mathrm{span}(X,XA_1X,\dots, XA_{k-1}X, XBX), \\
        & (T_XP')^\perp = \mathrm{span}(X,XA_1X,\dots, XA_{k-1}X, XB'X).
    \end{split}
    \]
    The angle between these complementary spaces is then given by
    \[
    \resizebox{\textwidth}{!}{$
     \begin{aligned}
        \arccos\frac{\det\begin{pmatrix}
            \langle XA_iX, XA_jX\rangle_X & \langle XA_iX, XBX\rangle_X & \langle XA_iX, X\rangle_X \\ \langle XB'X, XA_jX\rangle_X & \langle XB'X, XBX\rangle_X & \langle XB'X, X\rangle_X \\ \langle X, XA_jX\rangle_X & \langle X, XBX\rangle_X & \langle X, X\rangle_X \\ 
        \end{pmatrix}_{1\leq i,j\leq k-1}}{\sqrt{\det\begin{pmatrix}
            \langle XA_iX, XA_jX\rangle_X & \langle XA_iX, XBX\rangle_X & \langle XA_iX, X\rangle_X \\ \langle XBX, XA_jX\rangle_X & \langle XBX, XBX\rangle_X & \langle XBX, X\rangle_X \\ \langle X, XA_jX\rangle_X & \langle X, XBX\rangle_X & \langle X, X\rangle_X \\ 
        \end{pmatrix}_{1\leq i,j\leq k-1}\det\begin{pmatrix}
            \langle XA_iX, XA_jX\rangle_X & \langle XA_iX, XB'X\rangle_X & \langle XA_iX, X\rangle_X \\ \langle XB'X, XA_jX\rangle_X & \langle XB'X, XB'X\rangle_X & \langle XB'X, X\rangle_X \\ \langle X, XA_jX\rangle_X & \langle X, XB'X\rangle_X & \langle X, X\rangle_X \\ 
        \end{pmatrix}_{1\leq i,j\leq k-1}}}.
        \end{aligned}
     $}
    \]
    To simplify this expression, note that
    \[
    \langle XA_iX,XA_jX\rangle_X = \mathrm{tr}(X^{-1}XA_iXX^{-1}XA_jX) = \mathrm{tr}(XA_iXA_j) = \langle X_i,X_j\rangle_{X^{-1}}.
    \]
    Additionally, since $X\in P\cap P'$, we have that
    \[
    \langle X,XA_iX\rangle_X = \mathrm{tr}(A_iX) = 0,\ \langle X,XBX\rangle_X = 0,\ \langle X,XB'X\rangle_X = 0,
    \]
    and 
    \[
    \langle X,X\rangle_X = \mathrm{tr}(I_n) = n.
    \]
    These simplify the formula into the form as presented in Lemma \ref{lem:3:2}.
\end{proof}
\begin{exm}
    If $P = B^\perp$ and $P' = B'^\perp$ are hyperplanes, then the Riemannian dihedral angle at any $X\in P\cap P'$ is given as
    \[
    \angle_{X}(P,P') = \arccos\frac{\mathrm{tr}(XBXB')}{\sqrt{\mathrm{tr}((XB)^2)\mathrm{tr}((XB')^2)}}.
    \]
\end{exm}
An essential corollary of Lemma \ref{lem:3:2} is the following asymptotic behavior of Riemannian dihedral angles to the Satake boundary:
\begin{prop}\label{prop:3:3}
    Suppose that $P$ and $P'$ are planes of the same dimension in $\mathcal{X}_n$, and $P\cap P'$ is of codimension $1$ in both $P$ and $P'$. Assume further that $\Pi$ is a Satake plane of type $n-k$ in $\overline{\mathcal{X}_n}$, and is transverse to both $\overline{P}$ and $\overline{P'}$. Then for each $\alpha\in \overline{P}\cap \overline{P'}\cap \Pi$ and $Y\in P\cap P'$, the limit of Riemannian dihedral angle
    \[
    \lim_{\epsilon\to 0_+}\angle_{\alpha+\epsilon Y}(P,P') = \angle_{\pi(\alpha)}(\pi(\overline{P}\cap \Pi),\pi(\overline{P'}\cap \Pi)).
    \]
    Here, $\pi$ is the diffeomorphism from $\Pi$ to $\mathcal{X}_{n-k}$ given in Definition \ref{def:3:5}.
\end{prop}
\begin{proof}
    Without loss of generality, let $\operatorname{Col}(\Pi) = \mathrm{span}(\mathbf{e}_{1},\dots,\mathbf{e}_{n-k})$, and let
    \[
    P = \mathrm{span}(A_1,\dots,A_{l-1},B)^\perp,\ P' = \mathrm{span}(A_1,\dots,A_{l-1},B')^\perp,
    \]
    For $i=1,\dots, l-1$, denote the minors of the first $(n-k)$ rows and columns of $A_i$, $B$ and $B'$ by $A_{i,0}$, $B_0$, and $B'_0$, respectively. Then,
    \[
    \pi(\overline{P}\cap \Pi) = \mathrm{span}(A_{1,0},\dots,A_{l-1,0},B_0)^\perp,\ \pi(\overline{P'}\cap \Pi) = \mathrm{span}(A_{1,0},\dots,A_{l-1,0},B'_0)^\perp.
    \]
    The transversality of $\Pi$ to $\overline{P}$ and $\overline{P'}$ ensures that $A_{0,1}$, \dots, $A_{0,l-1}$, $B_0$ and $B_0'$ are linearly independent. By Lemma \ref{lem:3:2}, we have
    \[
    \angle_{\pi(\alpha)}(\pi(\overline{P}\cap \Pi),\pi(\overline{P'}\cap \Pi)) = \arccos\frac{\left(\bigwedge_{i=1}^{l-1}A_{i,0}\wedge B_0,\bigwedge_{i=1}^{l-1}A_{i,0}\wedge B'_0\right)_{\alpha_0^{-1}}}{\sqrt{\left|\left|\bigwedge_{i=1}^{l-1}A_{i,0}\wedge B_0\right|\right|_{\alpha_0^{-1}}\cdot\left|\left|\bigwedge_{i=1}^{l-1}A_{i,0}\wedge B'_0\right|\right|_{\alpha_0^{-1}}}},
    \]
    where $\alpha = \mathrm{diag}(\alpha_0,O)$, i.e., $\alpha_0 = \pi(\alpha)$ is the minor consisting of the first $(n-k)$ rows and columns of $\alpha$. This suggests that $\alpha^{1/2}A_i\alpha^{1/2} = \mathrm{diag}(\alpha_0^{1/2}A_{i,0}\alpha_0^{1/2},O)$, for $i = 1,\dots,l-1$. Hence, as $\epsilon\to 0$, the inner products for the Riemannian angle have the following limits:
    \[
    \begin{split}
        & \lim_{\epsilon\to 0}\langle A_i,A_j\rangle_{(\alpha+\epsilon Y)^{-1}} = \mathrm{tr}(\alpha A_i\alpha A_j) \\
        & = \mathrm{tr}((\alpha^{1/2}A_i\alpha^{1/2})(\alpha^{1/2}A_j\alpha^{1/2})) = \mathrm{tr}((\alpha_0^{1/2}A_{i,0}\alpha_0^{1/2})(\alpha_0^{1/2}A_{j,0}\alpha_0^{1/2}))\\
        & = \mathrm{tr}(\alpha_0A_{i,0}\alpha_0A_{j,0}) = \langle A_{i,0},A_{j,0}\rangle_{\alpha_0^{-1}}.
    \end{split}
    \]
    By substituting these limits into the expression of $\angle_{\alpha+\epsilon Y}(P,P')$, we obtain that
    \[
    \lim_{\epsilon\to 0_+}\angle_{\alpha+\epsilon Y}(P,P') = \angle_{\pi(\alpha)}(\pi(\overline{P}\cap \Pi),\pi(\overline{P'}\cap \Pi)).
    \]    
\end{proof}
\begin{exm}
    Given hyperplanes $A^\perp$ and $B^\perp$ in $\mathcal{X}_3$, $A = \mathrm{diag}(A_0,0)$ and $B = \mathrm{diag}(B_0,0)$, where
    \[
    A_0 = \left(\begin{array}{cc}
        0 & -1\\
        -1 & 1\\
    \end{array}\right),\ B = \left(\begin{array}{cc}
        1 & -1\\
        -1 & 0\\
    \end{array}\right)
    \]
    Then, $A_0^\perp = \overline{A^\perp}\cap \partial_S(\mathbf{e}_1,\mathbf{e}_2)$ and $B_0^\perp = \overline{B^\perp}\cap \partial_S(\mathbf{e}_1,\mathbf{e}_2)$ are identified with geodesics in $\mathbf{H}^2$, meeting at the point
    \[
    \alpha_0 = \left(\begin{array}{cc}
        1 & 1/2 \\
        1/2 & 1
    \end{array}\right),
    \]
    with a Riemannian angle of $2\pi/3$. By Proposition \ref{prop:3:3}, for any line in $A^\perp\cap B^\perp$ that diverges to $\alpha = \mathrm{diag}(\alpha_0,0)\in\partial_S\mathcal{X}_3$, the Riemannian dihedral angle between $A^\perp$ and $B^\perp$ based at a point on this line will converge to $2\pi/3$, when the base point diverges to $\alpha$.
\end{exm}
\vspace{12pt}
\section{Proof of the Main Theorem}\label{Sec:5}
Let $D = DS(X,\Gamma)\subset \mathcal{X}_3$ be an exact, finitely-sided Dirichlet-Selberg domain of finite volume satisfying the tiling condition. Recally that $D$ has up to finitely many Satake boundary components of type two, and these components meet only at certain Satake vertices of type one. We prove Theorem \ref{thm:1:2} (the main result of this paper) in two steps:

\begin{enumerate}
    \item In Subsection~\ref{subsec:5:1}, we construct a subset
    \[
    D^{(1)}\subset D,
    \]
    namely a disjoint union of small neighborhoods around each Satake vertex of type one, such that $D^{(1)}$ meets only the faces incident to those vertices. We then show there exists \(r_1>0\) so that for every \(X\in D^{(1)}\), the \(r_1\)-ball centered at its image \(\widetilde{X}\in M:=D/\!\sim\) is complete. Remove these neighborhoods from $M$, we obtain a manifold (or orbifold) with boundary, denoted by $M'$. Let $D'\subset D$ be the preimage of $M'$. By construction, the Satake boundary components of type two in $D'$ are now pairwise disjoint.
    \item In Subsection \ref{subsec:5:2}, we similarly define
    \[
    D^{(2)}\subset D'
    \]
    as a disjoint union of neighborhoods around each remaining boundary component of type two, meeting only their incident faces. We then prove there exists \(r_2>0\) so that for all \(X\in D^{(2)}\subset D'\), the \(r_2\)-ball around its image in \(M'\) is complete. Since the complement 
    \(\,D\setminus (D^{(1)}\cup D^{(2)})\)\, is bounded, it follows that \(M=D/\!\sim\) is complete.
\end{enumerate}
Throughout the proof we assume, without loss of generality, that the domain \(D\) is centered at $X = I$, the identity matrix.
\subsection{Part I: Behavior Near Satake Vertices of Type One}\label{subsec:5:1}
We begin by analyzing the cycle structure of Satake vertices of type $1$. Since Busemann-Selberg functions depend on chosen reference points, we select them so as to satisfy a natural \textbf{vertex-cycle condition}:
\begin{lem}\label{lem:5:1}
    Let $\alpha\in \partial_S D$ be a Satake vertex of type $1$, and let $\Phi$ be a Satake face of type $2$ containing $\alpha$. Denot by $\eta$ and $\eta'$ the two edges of $\Phi$ meet at $\alpha$, and $w$ be any word in the cycle of edges sending $\eta$ to $\eta'$. Then $w$ also fixes $\alpha$. Writing the boundary component $\Pi = \mathrm{span}(\Phi)$, there exists a constant $C\geq 1$, depending only on $D$ and $\alpha$, such that for all $Y\in\mathcal{X}_3$,\
    \[
        C^{-1}\mathfrak{b}^{(1)}_{\Pi;\alpha,X}(Y)\leq \mathfrak{b}^{(1)}_{\Pi;\alpha,X}(w.Y)\leq C\mathfrak{b}^{(1)}_{\Pi;\alpha,X}(Y).
    \]
\end{lem}
\begin{proof}
    First, let $w_0$ be any cycle of the edge $\eta$. By Proposition \ref{prop:4:3}, the restriction of $w_0$ to $\mathrm{span}(\Phi)$ has finite order, hence is not loxodromic. It follows that $w_0$ fixes every point of $\Phi$ and preserves the Busemann-Selberg function $\mathfrak{b}^{(1)}_{\Pi;\alpha,X}$.
    
    Next, suppose $w$ and $w'$ are two words in the edge cycle sending $\eta$ to $\eta'$. Both preserve the boundary component $\Pi$, so both scale $\mathfrak{b}^{(1)}_{\Pi;\alpha,X}$ by the same factor $C_\Phi>0$. Reversing the cycle rescales by $C_\Phi^{-1}$. That is,
    \[
    \mathfrak{b}^{(1)}_{\Pi;\alpha,X}(w. Y)
    = C_\Phi\,\mathfrak{b}^{(1)}_{\Pi;\alpha,X}(Y),
    \quad
    \mathfrak{b}^{(1)}_{\Pi;\alpha,X}(w^{-1}. Y)
    = C_\Phi^{-1}\,\mathfrak{b}^{(1)}_{\Pi;\alpha,X}(Y).
    \]
    Since there are only finitely many such vertices $\alpha_i$ in the orbit of $\alpha$ and faces $\Phi_i$ through each $\alpha_i$, we may set
    \[
    C = \max_{\alpha_i,\Phi_i}\{C_{\Phi_i},C_{\Phi_i}^{-1}\},
    \]
    which yields the desired uniform bound.
\end{proof}
By Proposition \ref{prop:4:3} and Lemma \ref{lem:5:1}, we may now choose reference-point-free Busemann-Selberg functions $\mathfrak{b}_{\alpha_i}$ and $\mathfrak{b}^{(1)}_{\Pi_i;\alpha_i}$ for each $\alpha_i\in[\alpha]$, so that:
\begin{itemize}
    \item If $\alpha_j = w.\alpha_i$ for some $w$ in the vertex cycle, then
    \[
    \mathfrak{b}_{\alpha_i}(Y) = \mathfrak{b}_{\alpha_j}(w.Y),\ \text{for any}\ Y\in\mathcal{X}_3.
    \]
    \item If $\eta_i$, $\eta_j$ are edges in the same edge orbit $[\eta]$, with $\eta_j = w.\eta_i$ and corresponding boundary components $\Pi_i$, $\Pi_j$, then
    \[
    C^{-1}\mathfrak{b}^{(1)}_{\Pi_i;\alpha_i}(Y)\leq \mathfrak{b}^{(1)}_{\Pi_j;\alpha_j}(w.Y)\leq C\mathfrak{b}^{(1)}_{\Pi_i;\alpha_i}(Y).
    \]
\end{itemize}
We denote the associated horoballs (independent of reference points) by $B(\alpha_i,r)$ and $B^{(1)}_{\Pi_i}(\alpha_i,r)$. Since $D$ has only finitely many faces, we define a neighborhood of $\alpha$ inside $D$ by
\[
B_D^{(1)}(\alpha,r) = \bigcap_{{\Phi\ni \alpha}}B_{\mathrm{span}(\Phi)}^{(1)}(\alpha,r),
\]
where the intersection runs over all type-$2$ Satake faces $\Phi\ni\alpha$.

As the parameter $r$ approaches to zero, the lemma below implies that the neighborhood $B_D^{(1)}(\alpha,r)$ shrinks to the Satake vertex $\alpha$:
\begin{lem}\label{lem:5:2}
    For any $r>0$, the closure $\overline{B_D^{(1)}(\alpha,r)}\cap \overline{D}$ contains a neighborhood of $\alpha$ within $\overline{D}$. Moreover, the intersection
    \[
    \bigcap_{m=1}^\infty \left(\overline{B_D^{(1)}(\alpha,1/m)}\cap \overline{D}\right) = \{\alpha\}.
    \]
\end{lem}
\begin{proof}
    For the first assertion, we need to show that $\overline{B_{\Pi}^{(1)}(\alpha,r)}$ contains a neighborhood of $\alpha$ in $\overline{D}$, where $\Pi = \mathrm{span}(\Phi)$ and $\Phi$ is any type $2$ Satake face containing $\alpha$.

    To establish this, let $S$ be a sphere in $\mathbb{R}\mathbf{P}^5$ centered at $\alpha$ that intersects every face or Satake face of $\overline{D}$ containing $\alpha$. Then, the convex hull of $\alpha\sqcup \left(S\cap \overline{D}\right)$ contains a neighborhood of $\alpha$ in $\overline{D}$. We aim to show that this neighborhood is contained in $\overline{B_{\Pi}^{(1)}(\alpha,r)}$ when the radius of $S$ is sufficiently small. This is justified by showing that the line segment from $\alpha$ to $\alpha+\epsilon X$ is entirely contained within $\overline{B_{\Pi}^{(1)}(\alpha,r)}$, where $X$ is a point in $S\cap \overline{D}$, and $\epsilon>0$ depends on $X$. Such points $X$ can be categorized into three cases:
    \begin{enumerate}[(i)]
        \item $X\in D$,
        \item $X\in \Phi$, or
        \item $X$ lies on a type $2$ Satake face distinct from $\Phi$.
    \end{enumerate}
    
    Case (i): When $X\in D$, this containment is straightforward.

    Case (ii): When $X\in \Phi$, Lemma \ref{lem:3:6} implies that for any smooth curve $\alpha+\epsilon X +t Y$ approaching $\alpha+\epsilon X$ in $\overline{D}$, where $Y\in\mathcal{X}_3$, the Busemann-Selberg function $\mathfrak{b}^{(1)}_{\Pi;\alpha}(\alpha+\epsilon X +t Y)$ converges to $\mathfrak{b}_{\pi(\alpha)}(\pi(\alpha+\epsilon X))$, a value less than $r$ for sufficiently small $\epsilon>0$. Proposition \ref{prop:4:2} then implies that $\alpha+\epsilon X$ is on the type-one horosphere $\Sigma_{\Pi}^{(1)}(\alpha,r)$. Thus, the segment from $\alpha$ to $\alpha+\epsilon X$ remains within $\overline{B_{\Pi}^{(1)}(\alpha,r)}$. 
    
    Case (iii): When $X$ is in a type $2$ Satake face distinct from $\Phi$, Lemma \ref{lem:3:5} ensures that the entire line segment from $X$ to $\alpha$ lies within $\overline{B_{\Pi}^{(1)}(\alpha,r)}$. 
    
    Since $S\cap \overline{D}$ is compact and $\mathfrak{b}_{\Pi;\alpha}^{(1)}$ extends continuously to Satake facets in $\partial_S D$ that contain $\alpha$, we can select $\epsilon$ uniformly over all $X\in S\cap \overline{D}$. Thus, a neighborhood of $\alpha$ is indeed contained in $\overline{B_{\Pi}^{(1)}(\alpha,r)}$.

    For the second assertion, notice that for any $\Phi\ni\alpha$ and $\Pi = \mathrm{span}(\Phi)$, the intersection
    \[
    \bigcap_{m=1}^\infty\left(\overline{B_{\Pi}^{(1)}(\alpha,1/m)}\cap \overline{D}\right)
    \]
    excludes all points in $D$; by Lemma \ref{lem:3:6}, it also excludes all points in the Satake face $\Phi$, except for $\alpha$ itself. Taking the intersection over all type $2$ Satake faces $\Phi$ containing $\alpha$ yields:
    \[
    \bigcap_{m=1}^\infty \left(\overline{B_D^{(1)}(\alpha,1/m)}\cap \overline{D}\right) = \bigcap_\Phi\bigcap_{m=1}^\infty\left(\overline{B_{\Pi}^{(1)}(\alpha,1/m)}\cap \overline{D}\right) = \{\alpha\}.
    \]
\end{proof}
Lemma \ref{lem:5:2} ensures the existence of a constant $r>0$ such that the sets $\overline{B_D^{(1)}(\alpha,r)}$ for all type $1$ Satake vertices $\alpha\in\mathcal{F}_S(D)$ form a disjoint union
\[
    \bigsqcup_\alpha B_D^{(1)}(\alpha,r),
\]
consisting of neighborhoods of those type $1$ Satake vertices in $\overline{D}$. The second assertion of the lemma further implies that $r$ can be selected such that each of these component is separated from any face not incident with the corresponding Satake vertex.

We still need a lemma concerning certain relationships between type-one horoballs and classic horoballs based at the same Satake vertex:
\begin{lem}\label{lem:5:3}
    There exists certain constants $r'>0$ and $\epsilon>\epsilon'>0$, such that:
    \begin{enumerate}
        \item For each type-$2$ Satake face $\Phi\ni\alpha$ with $\Pi = \mathrm{span}(\Phi)$, and for any face $G\in \mathcal{F}(D)$ either disjoint from $\Pi$ or precisely incident with $(\alpha,\Pi)$, the set 
        \[
        B(\alpha,r')\backslash B_{\Pi}^{(1)}(\alpha,C^{-1}e^{-2\epsilon}r)
        \]
        lies at distance at least $\epsilon$ from $G$.
        \item If $\eta$ and $\eta'$ are the two edges of $\Phi$ meeting at $\alpha$, and $F,F'\in \mathcal{F}(D)$ are faces precisely incident with $\eta$ and $\eta'$ respectively, then their intersections with
        \[
            B(\alpha,r')\backslash B_{\Pi}^{(1)}(\alpha,C^{-1}e^{-2\epsilon}r),
        \]
        are separated by distance at least $\epsilon'$.
        \item For any two distinct type-$2$ Satake faces $\Phi, \Phi'\ni \alpha$,
        \[
        D\cap B(\alpha,r')\subset D\cap \left(B_\Phi^{(1)}(\alpha,C^{-1}e^{-2\epsilon}r)\cup B_{\Phi'}^{(1)}(\alpha,C^{-1}e^{-2\epsilon}r)\right).
        \]
    \end{enumerate}
\end{lem}
\begin{proof}
    \textbf{(1).} Consider the nested intersections
    \[
    \overline{D}\cap\left(\bigcap_{m=1}^\infty \overline{B(\alpha,1/m)\backslash B_{\Pi}^{(1)}(\alpha,C^{-1}r)}\right).
    \]
    Similar to the proof of Lemma \ref{lem:5:2}, this is the complement of a horoball in $\Phi$ based at $\alpha$, so it is disjoint from any face $G\in \mathcal{F}(D)$ either disjoint from $\Pi$ or incident only at $(\alpha,\Pi)$. By the $1$-Lipschitz property for Busemann-Selberg functions (Proposition \ref{prop:3:4}), for sufficiently small $r'>0$ we obtain a uniform buffer of $3\epsilon$ between
    \[
    B(\alpha,r')\backslash B_{\Pi}^{(1)}(\alpha,C^{-1}r)
    \]
    for all such faces $G\in \mathcal{F}(D)$. This yields our first assertion.
    
    \textbf{(2).} Similarly, for each of the two edges \(\eta,\eta'\) through \(\alpha\), the infinite intersections
    \[
    \overline{F}\cap\left(\bigcap_{m=1}^\infty \overline{B(\alpha,1/m)\backslash B_{\Pi}^{(1)}(\alpha,C^{-1}e^{-2\epsilon}r)}\right)
    \]
    and
    \[
    \overline{F'}\cap\left(\bigcap_{m=1}^\infty \overline{B(\alpha,1/m)\backslash B_{\Pi}^{(1)}(\alpha,C^{-1}e^{-2\epsilon}r)}\right)
    \]
    are the complements of a horoball in the Satake edges $\eta$ and $\eta'$. Hence, by shrinking $r'$ if necessary, one finds $\epsilon'$ so that the corresponding truncated regions are at least \(\epsilon'\) apart, proving, proving our second assertion.
        
    \textbf{(3).} Finally, the infinite intersection
    \[
        \overline{D}\cap\left(\bigcap_{m=1}^\infty \overline{B(\alpha,1/m)}\right)
    \]
    is the union of all Satake faces containing $\alpha$. Since every such face is contained in at least one of the two horoballs $B_\Phi^{(1)}(\alpha,C^{-1}e^{-2\epsilon}r)$ or $B_{\Phi'}^{(1)}(\alpha,C^{-1}e^{-2\epsilon}r)$, it follows that for sufficiently $r'>0$, $D\cap B(\alpha,r')$ is contained in the union of these two type-one horoballs.
\end{proof}
With constants $C$, $r$, $r'$, and $\epsilon$ depending only on the Dirichlet-Selberg domain $D$ defined from Lemmas \ref{lem:5:1} to \ref{lem:5:3}, we are ready to define the set claimed at the beginning: 
    \[
    D^{(1)} = \bigcup_\alpha\left(B_D^{(1)}(\alpha,e^{-2\epsilon}C^{-1}r)\cap B(\alpha,e^{-\epsilon}r')\right).
    \]
As the first half of the proof of the main theorem, we will establish the uniform compactness for balls centered in $D^{(1)}/\sim$.
\begin{proof}[Proof of Theorem \ref{thm:1:2}, first half]
    We aim to prove that for every $\widetilde{X}\in D^{(1)}/\sim$, represented by the point
        \[
        X\in \bigcup_\alpha\left(B_D^{(1)}(\alpha,e^{-2\epsilon}C^{-1}r)\cap B(\alpha,e^{-\epsilon}r')\right),
        \]
    the ball $N(\widetilde{X},\epsilon'/2)$ is compact. Specifically, we will show that for each such $\widetilde{X}$, the preimage of $N(\widetilde{X},\epsilon'/2)$ is contained in the compact region
        \[
        \bigcup_\alpha\left(B_D^{(1)}(\alpha,r)\cap B(\alpha,r')\backslash B(\alpha,e^{-\epsilon'}\mathfrak{b}_{\alpha}(X))\right).
        \]
    Assume, by way of contradiction, that there exists a (piecewise smooth) curve $\gamma$ in $D/\sim$ of length $\leq \epsilon'/2$, connecting $\widetilde{X}$ and another point $\widetilde{Y}$, where $\widetilde{Y}$ is represented by $Y\in\mathcal{X}_n$, and
        \[
        Y\notin \bigsqcup_\alpha\left(B_D^{(1)}(\alpha,r)\cap B(\alpha,r')\right),
        \]
    the disjointness is shown in Lemma \ref{lem:5:2}. Up to a sufficiently small perturbation, we further assume that the preimage of the curve $\gamma$ does not meet any faces of codimension $2$ or more, possibly except for the endpoints $X$ and $Y$. Therefore, the preimage is contained in a disjoint union of certain neighborhoods of Satake vertices $\alpha_1$,\dots, $\alpha_N$, consisting of a collection of segments glued together by the quotient map. For any point $\widetilde{X_i}\in D/\sim$ where two pieces of the preimage are glued together, its preimage consists of two points $X_i\sim X_i'$, paired by a certain facet-pairing transformation $g_i$, in neighborhoods of certain Satake vertices $\alpha_{k_i}$ and $\alpha_{k_{i-1}}$ of type $1$, respectively. We call $X_i$ and $X_i'$ a pair of glued points in $\gamma$.
    
    Consider the first intersection point of $\gamma$ with the set
    \[
    \partial\bigcup_\alpha\left(B_D^{(1)}(\alpha,r)\cap B(\alpha,r')\right),
    \]
    which we denote by $\widetilde{Z}$, represented by $Z\in D$. The preimage of the curve connecting $\widetilde{X}$ and $\widetilde{Z}$ consists of segments $(X_0,X_1')$, $(X_1,X_2')$,\dots, $(X_{m-1},X_m')$, where $X_i\sim X_i'$ are pairs of glued points, and $X = X_0$, $Z = X_m'$ for convenience. We analyze two cases for this intersection point:
        \begin{itemize}
            \item The point $Z$ lies on $\partial B(\alpha',r')$ for a certain Satake vertex $\alpha'$ of type $1$.
            \item The point $Z$ lies on $\partial B_{\Pi'}^{(1)}(\alpha',r)$ for a certain Satake vertex $\alpha'$ of type $1$ and a boundary component $\Pi' = \mathrm{span}(\Phi')$, where $\Phi'$ is a Satake face of type $2$ containing $\alpha'$.
        \end{itemize}
    Assume that the first case occurs. Lemma \ref{lem:5:2} implies that the preimage of the curve restricted to $B_D^{(1)}(\alpha,r)\cap B(\alpha,r')$ does not intersect any face not meeting $\alpha$. Therefore, for each pair of glued points $X_i\sim X_i'$ in the curve connecting $\widetilde{X}$ and $\widetilde{Z}$, Proposition \ref{prop:4:3} implies the equality 
    \[
    \mathfrak{b}_{\alpha_{k_i}}(X_i) = \mathfrak{b}_{\alpha_{k_{i-1}}}(X_i').
    \]
    Combining this with the Lipschitz condition for Busemann-Selberg functions (Proposition \ref{prop:3:4}), we deduce that
        \[
        \mathfrak{b}_{\alpha'}(Z)<e^{\epsilon'}\mathfrak{b}_\alpha(X)<r',
        \]
    given that the segments in the preimage of the curve connecting $\widetilde{X}$ and $\widetilde{Z}$ have a total length less than $\epsilon'$. However, this contradicts the assumption $Z\in \partial B(\alpha',r')$.

    Now assume that the second case occurs. Let $(\Pi',\alpha') = (\Pi_{k_{m-1}},\alpha_{k_{m-1}})$, and inductively define that $(\Pi_{k_{i-1}},\alpha_{k_{i-1}})$ to be the pair of boundary component with Satake vertex taken to $(\Pi_{k_i},\alpha_{k_i})$ by $g_i$. Then $\alpha_{k_0} = \alpha$, and $\Pi_{k_0}$ is one of the boundary components containing $\alpha$. Denote it by $\Pi$, the assumption implies
        \[
        \mathfrak{b}^{(1)}_{\Pi,\alpha}(X)\leq e^{-2\epsilon}C^{-1}r,\ \mathfrak{b}^{(1)}_{\Pi',\alpha'}(X') = r.
        \]
    Let $\Phi_{k_i}$ be the Satake face contained in $\Pi_{k_i}$. Since $X_i$ and $X_i'$ lie in the interior of facets of $D$, $g_i.\Phi_{k_{i-1}}$ and $\Phi_{k_i}$ share at least a side. According to the choice of type-one Busemann-Selberg functions, their values $\mathfrak{b}^{(1)}_{\Pi_{k_{i-1}},\alpha_{k_{i-1}}}(X_i')$ and $\mathfrak{b}^{(1)}_{\Pi_{k_i},\alpha_{k_i}}(X_i)$ differs by a constant multiplier $\leq C$. Combining this fact with the $1$-Lipschitz condition for type-one Busemann-Selberg functions (Proposition \ref{prop:3:4}), there is a certain $X_j$ such that
        \[
        e^{-2\epsilon}C^{-1}r\leq \mathfrak{b}^{(1)}_{\Pi_{k_j},\alpha_{k_j}}(X_j)\leq e^{-\epsilon}r.
        \]
    The third assertion in Lemma \ref{lem:5:3} implies that for each $\alpha$, the union
    \[
    \bigsqcup_{\Pi\ni\alpha}B(\alpha,r')\backslash B^{(1)}_\Pi(\alpha,C^{-1}e^{-2\epsilon}r)
    \]
    is disjoint. The first assertion in Lemma \ref{lem:5:3} implies that the preimage of the curve from $\widetilde{X_j}$ to $\widetilde{Z}$ restricted to the component for $\Pi$ of the union above does not meet faces not incident with the two edges $\eta$ and $\eta'$ in $\Pi$. Moreover, the second assertion in Lemma \ref{lem:5:3} implies that balls centered at points in the cycle of $X_j$ with radius $\epsilon'/2$ are disjoint and do not intersect facets that precisely incident with a different Satake line. Therefore, along the preimage of the curve from $\widetilde{X}_j$ to $\widetilde{Z}$, the corresponding facet-pairing transformations compose into a word $w$, which maps $\Pi_{k_{j-1}}$ to $\Pi_{k_m}$, ensuring that $w.\Phi_{k_{j-1}}$ and $\Phi_{k_m}$ share at least a side. Consequently, the values $\mathfrak{b}^{(1)}_{\Pi',\alpha'}(Z)$ is strictly less than $r$, contradicting the assumption that $Z$ lies on $\partial B_{\Pi'}^{(1)}(\alpha',r)$.

    This completes the proof of the first half of Theorem \ref{thm:1:2}.
\end{proof}
\begin{rmk}
    We can refine the construction by considering smaller neighborhoods of these Satake vertices, still denoted by $D^{(1)}$, such that any points $X,X'\in\partial D$ paired by a side pairing transformation are either both included in or excluded from $D^{(1)}$.
\end{rmk}
\subsection{Part II: Behavior Near Satake Faces of Type Two}\label{subsec:5:2}
We have derived a polytope $D' = D\backslash D^{(1)}$ with unpaired boundary components that does not contain Satake vertices of type $1$, and contains only disjoint Satake faces of type $2$. In this subsection, we proceed to analyze the cycles of these type $2$ Satake faces.

The first lemma in this subsection is parallel to Lemma \ref{lem:5:2} and proved similarly:
\begin{lem}\label{lem:5:4}
    Let $\Phi_i$ be a Satake face of $D$, and $\alpha_{\Phi_i}$ be an interior point of $\mathrm{span}(\Phi_i)$. Then, for any $r>0$, the closure $\overline{B(\alpha_{\Phi_i},r)}\cap \overline{D'}$ contains a neighborhood of $\Phi_i$ in $\overline{D'}$. Furthermore,
    \[
    \bigcap_{m=1}^\infty\left(\overline{B(\alpha_{\Phi_i},1/m)}\cap \overline{D'}\right) = \Phi_i\cap \overline{D'}.
    \]
\end{lem}
If the Satake face $\Phi_i$ is $2$-dimensional, the proof requires us to decompose the set $\overline{B(\alpha_{\Phi_i},r)}\cap \overline{D'}$ into three mutually exclusive parts:
\begin{itemize}
    \item Points contained in the $\delta$-neighborhood of a face precisely incident with a \textbf{vertex} of $\Phi_i$ at $\Pi_i$,
    \item Points not of the previous type, while contained in the $\epsilon$-neighborhood of a face precisely incident with an \textbf{edge} of $\Phi_i$ at $\Pi_i$, and
    \item All other points in $\overline{B(\alpha_{\Phi_i},r)}\cap \overline{D'}$.
\end{itemize}
As shown in the following lemmas, we can choose certain constants $\epsilon,\delta>0$ such that the second part is a disjoint union corresponding to the edges of $\Phi_i$.
\begin{lem}\label{lem:5:5}
    Let $P_1$ and $P_2$ be hyperplanes in $\mathcal{X}_3$ passing through $I$, and let the Riemannian dihedral angle satisfy
    \[
    0<\theta_1\leq \angle_I(P_1,P_2)\leq \theta_2<\pi.
    \]
    Then for each $\delta>0$, there exists $\epsilon>0$ depending on $\delta$, $\theta_1$ and $\theta_2$, such that
    \[
    N(I,1)\cap N(P_1,\epsilon)\cap N(P_2,\epsilon)\subset N(I,1)\cap N(P_1\cap P_2,\delta).
    \]
    Here, $N(P,r)$ denotes the $r$-neighborhood of $P$ in $\mathcal{X}_3$.
\end{lem}
\begin{proof}
    Consider the space of all pairs of hyperplanes in $\mathcal{X}_3$ passing through $I$ with topology induced by their normal vectors. There exists a value $\epsilon$ satisfying the inclusion condition, depending on the hyperplane pair $(P_1,P_2)$. 
    
    This defines a function on the space of hyperplane pairs, which is continuous and is strictly positive whenever the dihedral angle $\angle_I(P_1,P_2)$ is bounded away from $0$ and $\pi$. Since the space of hyperplane pairs is compact, there exists a constant $\epsilon>0$ such that the inclusion condition holds for all such pairs $(P_1,P_2)$. 
\end{proof}
\begin{lem}\label{lem:5:6}
    Let $\eta$ and $\eta'$ be adjacent edges of the Satake face $\Phi$, such that $\eta\cap \eta' = \alpha$. Let $F$ and $F'$ be faces of $D$ precisely incident with $\eta$ and $\eta'$, respectively. Then there is a certain $r>0$, such that for every sufficiently small $\delta>0$, there is a certain $\epsilon>0$, satisfying
    \[
        \left(B(\alpha_{\Phi},r)\cap \left(F\backslash N(G,\delta)\right)\right)\cap\left(B(\alpha_{\Phi},r)\cap \left(F'\backslash N(G,\delta)\right)\right) = \varnothing,
    \]
    and are separated from each other by distance at least $\epsilon$. Here, $G = F\cap F'$ if it is non-empty. If $F\cap F' = \varnothing$, $G$ is an arbitrary face that is precisely incident with $\alpha$. 
\end{lem}
\begin{proof}
    \textbf{Case (1)}. If $F\cap F' = \varnothing$, we have $\overline{F}\cap \overline{F'}\cap \overline{B(\alpha_{\Phi},r)} = \alpha$. For any $G$ precisely incident with $\alpha$, Lemma \ref{lem:3:3} implies that the completion $\overline{N(G,\delta)}$ contains a neighborhood of $\alpha$ in $\overline{D}$. Therefore, 
    \[
    \overline{F\backslash N(G,\delta)}\ \text{and}\ \overline{F'\backslash N(G,\delta)}
    \]
    does not meet in $\overline{B(\alpha_{\Phi},r)}$, making them of a positive distance away from each other.
    
    \textbf{Case (2)}. Suppose $F\cap F'$ is a face of $D$ precisely incident with $\alpha$ at $\Pi = \mathrm{span}(\Phi)$. Without loss of generality, consider the case when $F$ and $F'$ are facets. According to Proposition \ref{prop:3:3}, the angle $\angle_X(F,F')$ satisfies
    \[
    \angle_X(F,F')\to \angle_\alpha(\eta,\eta') := \theta\in (0,\pi),
    \]
    as the base point $X\in F\cap F'$ is asymptotic to $\alpha$. By Lemma \ref{lem:5:4}, there exists $r>0$ such that
    \[
    \frac{\theta}{2}\leq \angle_X(F,F')\leq \frac{\theta+\pi}{2},
    \]
    for all $X\in F\cap F'\cap B(\alpha_{\Phi},r)$.

    Now fix $X\in F\cap F'\cap B(\alpha_{\Phi},r)$. There exists $g\in SL(3,\mathbb{R})$ such that $g.X = I$. Moreover,
    \[
    \angle_I(g.F,g.F') = \angle_X(F,F')\in \left[\frac{\theta}{2}, \frac{\theta+\pi}{2}\right],
    \]
    where $\mathrm{span}(g.F)$ and $\mathrm{span}(g.F')$ are hyperplanes in $\mathcal{X}_3$ passing through $I$. By Lemma \ref{lem:5:5}, there exists $\epsilon>0$ such that
    \[
    N(I,1)\cap N(g.F,\epsilon)\cap N(g.F',\epsilon)\subset N(I,1)\cap N(g.F\cap g.F',\delta).
    \]
    Pulling back by $g^{-1}$:
    \[
    N(X,1)\cap N(F,\epsilon)\cap N(F',\epsilon)\subset N(X,1)\cap N(F\cap F',\delta).
    \]
    Since the number $\epsilon>0$ is independent of $X$, we apply this for all points $X$ in $X\in F\cap F'\cap B(\alpha_{\Phi},r)$ and deduce
    \[
    B(\alpha_{\Phi},r)\cap N(F\cap F',1)\cap N(F,\epsilon)\cap N(F',\epsilon)\subset N(F\cap F',\delta).
    \]
    We claim that for any $Y\in N(F,\epsilon)$ and $Y'\in N(F',\epsilon)$ outside of $N(F\cap F',1)$, the distance $d(Y,Y')\geq 2\epsilon$ as well. Assume this is not true, then for $X\in F\cap F'$ and lines $s$ and $s'$: $[0,1]\to\mathcal{X}_3$ from $X$ to $Y$ and $Y'$, the distance from $s(t)$ to $s'$ strictly increases as $t$ increases from $0$ to $1$. However, when $s(t)$ lies in $N(F\cap F',1)\backslash N(F\cap F',\delta+\epsilon)$, its distance to $s'$ is at least $2\epsilon$, contradicting the assumption $d(Y,Y')<2\epsilon$.

    Thus, we may eliminate $N(F\cap F',1)$ from the inclusion above, yielding that
    \[
    B(\alpha_{\Phi},r)\cap (F\backslash N(F\cap F',\delta)) \text{ and } B(\alpha_{\Phi},r)\cap (F'\backslash N(F\cap F',\delta))
    \]
    are separated by distance at least $\epsilon$.
\end{proof}
For each two-dimensional Satake face $\Phi$ and one-dimensional Satake edge $\eta$ in $D'$, Proposition \ref{prop:4:3} provides fixed points $\alpha_{\Phi}$ and $\alpha_{\eta}$ under the corresponding Satake cycles. Moreover, we may choose these points and their Busemann-Selberg functions so that, whenever $\Phi_j = w.\Phi_i$ for some word $w$ in the cycle $[\Phi]$, one has
\[
\alpha_{\Phi_j} = w.\alpha_{\Phi_i}\text{ and }\mathfrak{b}_{\alpha_{\Phi_i}}(Y) = \mathfrak{b}_{\alpha_{\Phi_j}}(w.Y),\quad \forall Y\in\mathcal{X}_3,
\]
and similarly for Satake edge and vertex cycles in $D'$. We will show that there is $r>0$ so that
\[
    D^{(2)} = \bigcup_{\Pi} B(\alpha_{\Phi_\Pi},r)
\]
is a disjoint union (indexed by the maximal Satake faces $\Phi_\Pi$ lying in each boundary component $\Pi$) and that balls in $D^{(2)}/\sim$ of a uniform radius are compact.
\begin{proof}[Proof of Theorem \ref{thm:1:2}, second half]
    \textbf{Step (1).} By the discussion following Lemma \ref{lem:5:2} and since there are finitely many Satake vertices in $D'$, we can choose $\delta>0$ and $r'>0$ such that
    \[
        D^{(2),0} = \bigsqcup_{\alpha}\left(B(\alpha,r')\cap \bigcup_{F_\alpha}N(F_\alpha,\delta)\right)
    \]
    is a disjoint union (over the Satake vertices $\alpha$), and remains so if $\delta$ is replaced by $2\delta$. Here $F_\alpha$ ranges over faces precisely incident with $\alpha$, and each component $B(\alpha,r')\cap \bigcup_{F}N(F,\delta)$ meets no faces not incident with its $\alpha$. By Proposition \ref{prop:4:3}, $\mathfrak{b}_\alpha$ is invariant under the Satake cycle of $\alpha$, and one shows exactly as in the classical hyperbolic case that balls of radius $\delta$ in in $D^{(2),0}/\sim$ are compact.
    
    \textbf{Step (2).} If $\alpha,\alpha'$ lie in the interior of the same boundary component $\Pi$, then
    \[
    C^{-1}\mathfrak{b}_{\alpha'}<\mathfrak{b}_{\alpha}<C\mathfrak{b}_{\alpha'},
    \]
    for some $C>1$. Using Lemma \ref{lem:5:6}, we choose $\epsilon$ and $r''>0$ so that
    \[
    D^{(2),1} = \bigsqcup_{\eta}\left(B(\alpha_\eta,r'')\cap \bigcup_{F_\eta} N(F_\eta,\epsilon)\backslash D^{(2),0}\right),
    \]
    is a disjoint union (over Satake edges $\eta$), remains so if $\epsilon$ is replaced by $2\epsilon$, and each component does not meet faces not incident with its $\eta$. Again, its image in the quotient has compact $\epsilon$-balls.

    \textbf{Step (3).} By the same comparability argument, there is \(r'''>0\) so that
    \[
        D^{(2),2} = \bigsqcup_{\Phi}\left(B(\alpha_\Phi,r''')\cap \bigcup_{F_\Phi} N(F_\Phi,\epsilon)\backslash \left(D^{(2),0}\cup D^{(2),1}\right)\right),
    \]
    is a disjoint union and each component meets only faces incident with its $\Phi$. For some \(\epsilon'>0\), balls of radius \(\epsilon'\) in $D^{(2),2}/\sim$ are compact.

    Finally, the compacrability argument allows us to choose $r>0$ small enough ensuring
    \[
    D^{(2)} =\bigcup_{\Pi} B(\alpha_{\Phi_\Pi},r)\subset D^{(2),0}\cup D^{(2),1}\cup D^{(2),2},
    \]
    so $D^{(2)}$ is a disjoint union with uniformly compact balls in the quotient, completing the proof.
\end{proof}
Combining the constructions of Subsections~\ref{subsec:5:1} and~\ref{subsec:5:2} yields the full proof of Theorem~\ref{thm:1:2}.
\vspace{12pt}
\section{An Example of a Dirichlet-Selberg Domain}\label{Sec:6}
In this section we exhibit an explicit finite-volume, complete $\mathcal X_3$-orbifold by gluing together a Dirichlet-Selberg domain along its facets.
\begin{exm}\label{exm:6:1}
    Let \(D\subset\mathcal X_3\) be the projective $5$-simplex whose six vertices lie on the Satake boundary and are given by the rank-one matrices
    \[
    \alpha_{1,2} = \left(\begin{array}{ccc}
        1 & \pm 1 & 0 \\
        \pm 1 & 1 & 0 \\
        0 & 0 & 0
    \end{array}\right),\ \alpha_{3,4} = \left(\begin{array}{ccc}
        1 & 0 & \pm 1 \\
        0 & 0 & 0 \\
        \pm 1 & 0 & 1
    \end{array}\right),\ \alpha_{5,6} = \left(\begin{array}{ccc}
        0 & 0 & 0 \\
        0 & 1 & \pm 1 \\
        0 & \pm 1 & 1
    \end{array}\right).
    \]
    Equivalently, under the identification of type-one component of $\partial_S\mathcal{X}_3$ with \(\mathbb{R}\mathbf{P}^2\), these correspond to \(\alpha_{1,2}=[1:\pm1:0]\), \(\alpha_{3,4}=[1:0:\pm1]\), \(\alpha_{5,6}=[0:1:\pm1]\). Label the unique facet of \(D\) missing \(\alpha_i\) by \(F_i\), for \(i=1,\dots,6\).

    Define three elements of \(SL(3,\mathbb{R})\),
    \[
    a = \left(\begin{array}{ccc}
        \frac{1}{2} & \frac{1}{2} & 0 \\
        \frac{1}{2} & -\frac{1}{2} & 1 \\
        \frac{1}{2} & -\frac{1}{2} & -1
    \end{array}\right),\ b = \left(\begin{array}{ccc}
        -\frac{1}{2} & 1 & \frac{1}{2} \\
        -\frac{1}{2} & -1 & \frac{1}{2} \\
        \frac{1}{2} & 0 & \frac{1}{2}
    \end{array}\right),\ c = \left(\begin{array}{ccc}
        -1 & \frac{1}{2} & -\frac{1}{2} \\
        0 & \frac{1}{2} & \frac{1}{2} \\
        1 & \frac{1}{2} & -\frac{1}{2}
    \end{array}\right),
    \]
    and set $\Gamma_0 = \{a,b,c,a^{-1},b^{-1},c^{-1}\}$. One checks that
    \[
        a.F_6 = F_1,\ b.F_2 = F_3,\ c.F_4 = F_5,
    \]
    and that each facet $F_i$ lies in the bisector $\mathrm{Bis}(I,g_i.I)$ for the corresponding generator $g_i\in \Gamma_0$. Hence $D$ is the Dirichlet-Selberg domain
    \[
    D = DS(I,\Gamma_0)\subset\mathcal{X}_3.
    \]

    The $15$ ridges $r_{ij} = F_i\cap F_j$ (for $1\leq i<j\leq 6$) break into five cycles under the action of $\Gamma_0$:
    \[
    \begin{split}
        & r_{56}\xrightarrow{a} r_{12}\xrightarrow{b} r_{34}\xrightarrow{c} r_{56}, \\
        & r_{14}\xrightarrow{a^{-1}} r_{36}\xrightarrow{b^{-1}} r_{25}\xrightarrow{c^{-1}} r_{14}, \\
        & r_{26}\xrightarrow{a} r_{16}\xrightarrow{a} r_{13}\xrightarrow{b^{-1}} r_{26}, \\
        & r_{24}\xrightarrow{b} r_{23}\xrightarrow{b} r_{35}\xrightarrow{c^{-1}} r_{24}, \\
        & r_{46}\xrightarrow{c} r_{45}\xrightarrow{c} r_{15}\xrightarrow{a^{-1}} r_{46}.
    \end{split}
    \]
    By direct computation of the invariant angle function \cite{Du2024-dp} one finds, for the first cycle, \(\theta_{inv}(r_{12})=\theta_{inv}(r_{34})=\theta_{inv}(r_{56}) = \frac{2\pi}{3}\), whence the total angle sum is \(2\pi\). For each of the remaining four cycles the sum of the (Riemannian) dihedral angles is \(\pi\). Thus \(D\) satisfies the angle-sum condition for Dirichlet-Selberg domains.

    Consequently, gluing the facets of \(D\) via the identifications in \(\Gamma_0\) produces a complete, finite-volume $\mathcal X_3$-orbifold \(M=D/\sim\).
\end{exm}
By Theorem \ref{thm:1:2}, the orbifold $M$ of Example \ref{exm:6:1} is complete. Hence, Poincar\'e's Fundamental Polyhedron Theorem yields the following presentation of its fundamental group.
\begin{cor}
    Let
    \[
    a = \left(\begin{array}{ccc}
        \frac{1}{2} & \frac{1}{2} & 0 \\
        \frac{1}{2} & -\frac{1}{2} & 1 \\
        \frac{1}{2} & -\frac{1}{2} & -1
    \end{array}\right),\ b = \left(\begin{array}{ccc}
        -\frac{1}{2} & 1 & \frac{1}{2} \\
        -\frac{1}{2} & -1 & \frac{1}{2} \\
        \frac{1}{2} & 0 & \frac{1}{2}
    \end{array}\right),
    \]
    and let $\Gamma = \langle a,b\rangle$. Then $\Gamma$ is a lattice in $SL(3,\mathbb{R})$ with presentation
    \[
    \Gamma = \langle a,b|(aba^{-1}b^{-1})^2,(ababa)^2,(a^2b^{-1})^2,(ab^3)^2\rangle.
    \]
\end{cor}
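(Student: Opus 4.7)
The plan is to derive the corollary from Example \ref{exm:7:1} by applying Theorem \ref{thm:1:1} together with Poincar\'e's Fundamental Polyhedron Theorem, followed by a Tietze reduction of the resulting presentation. First I would verify that the polyhedron $D$ of Example \ref{exm:7:1} meets the hypotheses of Theorem \ref{thm:1:1}: it is exact and finitely-sided, which reduces to checking the six bisector identifications $P_i = Bis(I, g.I)$ by direct matrix computation; it is finite-volume because its six Satake vertices $\mathsf{A}_1, \dots, \mathsf{A}_6$ are positive semi-definite so the associated polyhedral cone $\mathbf{D}$ is contained in $\mathbf{P}$; and it satisfies the tiling condition through the five ridge cycles listed in the example.

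Granting these hypotheses, Theorem \ref{thm:1:1} yields that the quotient $M = D/\sim$ is complete. Poincar\'e's Fundamental Polyhedron Theorem then gives that $\Gamma = \langle g_1, g_3, g_5\rangle$ is a discrete subgroup of $SL(3,\mathbb{R})$ with $D$ as a fundamental domain and with presentation
\[
\langle g_1, g_3, g_5 \mid g_1 g_3 g_5,\ (g_1 g_5 g_3)^2,\ (g_1^2 g_3^{-1})^2,\ (g_3^2 g_5^{-1})^2,\ (g_5^2 g_1^{-1})^2 \rangle,
\]
carrying one relator per ridge cycle. Setting $a = g_1$ and $b = g_3$, the first relator eliminates $g_5 = b^{-1}a^{-1}$, and the remaining four are rewritten by standard Tietze transformations: $(g_1^2 g_3^{-1})^2$ becomes $(a^2 b^{-1})^2$ directly; $(g_3^2 g_5^{-1})^2 = (b^2 \cdot ab)^2$ is cyclically conjugate to $(ab^3)^2$; $(g_5^2 g_1^{-1})^2 = ((b^{-1}a^{-1})^2 a^{-1})^2$ is the inverse of $(ababa)^2$; and $(g_1 g_5 g_3)^2 = (a b^{-1} a^{-1} b)^2 = [a, b^{-1}]^2$ is conjugate to $[a, b]^2 = (aba^{-1}b^{-1})^2$ via the identity $[a, b^{-1}] = b^{-1}[a, b]^{-1} b$. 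This yields the presentation claimed in the corollary.

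I expect the main obstacle to be the verification of the ridge-cycle condition in Example \ref{exm:7:1}: although Riemannian dihedral angles in $\mathscr{P}_3$ depend on the base point (Section \ref{Sec:5}), the required angle sums around each ridge cycle must be shown to be well-defined and to take the stated values. The rank-two cycle $r_{56} \to r_{12} \to r_{34}$ is handled by Corollary \ref{cor:5:1}, which computes the limiting dihedral angle at each incident rank-two Satake vertex; this must be shown to equal $2\pi/3$ at every step, giving total angle $2\pi$. For the four cycles with angle sum $\pi$, one fixes a convenient base point on a chosen ridge, computes the Riemannian angle there using Lemma \ref{lem:5:1}, and uses the invariance supplied by Lemmas \ref{lem:3:2}--\ref{lem:3:3} to confirm that the corresponding word fixes the ridge setwise and acts as a half-rotation on a transverse slice, giving the total $\pi$ required for the $k = 2$ ridge-cycle condition.
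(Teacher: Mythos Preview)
Your proposal is correct and follows the same route as the paper: apply Theorem \ref{thm:1:1} and Poincar\'e's Fundamental Polyhedron Theorem to the polyhedron $D$ of Example \ref{exm:7:1}, then reduce the resulting three-generator presentation to two generators via Tietze transformations. The paper actually leaves the Tietze step entirely implicit (it simply states the corollary after the example), so your explicit computation of how each relator in $g_1,g_3,g_5$ rewrites under $a=g_1$, $b=g_3$, $g_5=b^{-1}a^{-1}$ is more detailed than anything the paper provides; in particular your handling of $(g_1g_5g_3)^2$ via the identity $[a,b^{-1}]=b^{-1}[a,b]^{-1}b$ is exactly what is needed and is not spelled out in the paper.
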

Since non-uniform lattices in \(SL(3,\mathbb{R})\) are quasi-isometric to \(SL(3,\mathbb{R})\) itself \cite{bridson2013metric}, the group \(\Gamma\) above is \textbf{not} Gromov-hyperbolic.

Next we describe the thin part of $M = \mathcal{X}_3/\Gamma$. Each vertex \(\alpha_i\) of \(D\) determines a one-dimensional subspace of $\mathbb{R}^3$, and these subspaces span $18$ full flags, corresponding to $1$-simplices or \textbf{Weyl chambers} in the visual boundary $\partial_\infty\mathcal{X}_3$. These flags break into three $\Gamma_0$-orbits and the associated minimal parabolic subgroups can be read off from the face-pairing data. For instance, considering the minimal parabolic subgroup $P = P_{V_\bullet}$, where
\[
V_\bullet = \mathrm{span}(\mathbf{e}_1+\mathbf{e}_3)\subset\mathrm{span}(\mathbf{e}_1+\mathbf{e}_2,\mathbf{e}_1+\mathbf{e}_3)\subset\mathbb{R}^3.
\]
Its unipotent radical \(P_0\) is torsion-free, satisfying \(P/P_0\cong K_4\). One finds generators
\[
u = \left(\begin{array}{ccc}
    1 & 1 & -1 \\
    0 & 0 & 1 \\
    0 & -1 & 2
\end{array}\right),\ v = \left(\begin{array}{ccc}
    0 & 0 & -1 \\
    -1 & 1 & -1 \\
    1 & 0 & 2
\end{array}\right),\ w = \left(\begin{array}{ccc}
    0 & 0 & -1 \\
    1 & 1 & 1 \\
    1 & 0 & 2
\end{array}\right),
\]
and the presentation
\[
P_0 = \langle u,v,w\mid [u,v]w^{-2},[u,w],[v,w]\rangle,
\]
so \(P_0\cong\pi_1(\mathbf{T}^2\rtimes_{\varphi}\mathbf{S}^1)\), the fundamental group of the mapping torus of $\varphi = \left(\begin{smallmatrix}
    1 & 2\\ 0 & 1
\end{smallmatrix}\right)$. Hence, each minimal-parabolic cuspidal end of $M$ is homeomorphic to
\[
\mathbb{R}_+^2\times((\mathbf{T}^2\rtimes_{\varphi}\mathbf{S}^1)/K_4).
\]
\begin{rmk}
    By Selberg's Lemma, \(M\) admits finite-degree manifold covers. For example, the kernel of the surjective homomorphism
    \[
    \Gamma_1 = \ker\left(\Gamma\hookrightarrow SL(3,\mathbb{Z}[\tfrac{1}{2}])\to SL(3,\mathbb{Z}/3\mathbb{Z})\right)
    \]
    is torsion-free and of index \(\lvert SL(3,\mathbb{Z}/3\mathbb{Z})\rvert=5616\). It remains an interesting question whether \(M\) admits a smaller-degree manifold cover.
\end{rmk}
\vspace{12pt}
\section{Future Directions}
Most of our constructions and results have been developed in the setting of the symmetric space \(\mathcal X_n\), but our proof of Theorem~\ref{thm:1:2} was carried out in detail only for \(\mathcal X_3\). We expect that the same arguments extend to arbitrary \(\mathcal X_n\) with help of the combinatoric structure of finite-volume Dirichlet-Selberg domains and properties of Busemann-Selberg functions.

A second, more ambitious direction is to remove the finite-volume condition.  Infinite-volume Dirichlet-Selberg domains arise from a much larger class of discrete subgroups of $SL(n,\mathbb{R})$, notably including various hyperbolic subgroups (e.g. surface group and knot group representations into $SL(n,\mathbb{R})$ \cite{long2011zariski,doi:10.1080/10586458.2011.565258}. Two new obstacles appear:
\begin{itemize}
  \item Infinite-volume polytopes admit infinitely many nonempty Satake boundary components (cf. Corollary~\ref{col:3:1}). By Proposition~\ref{prop:4:1}, a given horoball based at a type-one component meets infinitely many higher-type components, so one cannot trim away all ``higher-type intersections'' using only finitely many higher-type horoballs.
  
  \item As one approaches the Satake boundary inside an infinite-volume polyhedron, some Riemannian dihedral angles may tend to \(0\) or \(\pi\) (cf.\ Lemma~\ref{lem:3:2}). Therefore we lose the face-separation argument of Lemma~\ref{lem:5:6}, and would need new methods to prove the uniform compactness of balls, especially by exploiting the angle-sum condition.
\end{itemize}
Overcoming these issues - perhaps with additional techniques - could lead to a complete extension of our main theorem to the infinite-volume scenario.
\vspace{12pt}
\appendix
\section{An Inequality for Interlaced Sequence Deviations}
\begin{lem}\label{lem:append}
    Let \(n\) and \(k\) be positive integers with \(k<n\). Suppose 
    \[
    a_1 \ge a_2 \ge \cdots \ge a_n
    \quad\text{and}\quad
    b_1 \ge b_2 \ge \cdots \ge b_{n-k}
    \]
    are real numbers satisfying the interlacing condition,
    \[
    a_i\geq b_i\geq a_{i+k},\ i=1,\dots,n-k.
    \]
    Define the averages $\bar{a} = \frac{1}{n}\sum a_i$ and $\bar{b} = \frac{1}{n-k}\sum b_i$. Then,
    \[
    \sum_{i=1}^n (a_i - \bar{a})^2\geq \sum_{i=1}^{n-k}(b_i - \bar{b})^2.
    \]
\end{lem}
\begin{proof}
    We proceed by induction on \(k\). The base case \(k=1\) asserts that
    \[
    a_1\geq b_1\geq a_2\geq\dots\geq b_{n-1}\geq a_n.
    \]
    Fix such an interlacing \((a_i)\) and \((b_i)\). Since the squared deviation $(b_1,\dots, b_{n-1})\mapsto\sum_{i=1}^{n-1}(b_i - \bar{b})^2$ is convex, its restriction to $[a_2,a_1]\times\dots\times[a_{n-1},a_n]$ attains the maximum at a certain corner. Furthermore, the maximum is attained when the numbers $b_i$ are pairwise distinct. Hence up to reordering, \(\{b_1,\dots,b_{n-1}\}\) must equal \(\{a_1,\dots,a_n\}\setminus\{a_j\}\) for some \(1\le j\le n\). 
    
    A direct calculation then shows
    \[
    \sum_{i=1}^n\left(a_i - \bar{a}\right)^2 - \sum_{i\neq j}\left(a_i - \frac{\sum_{i\neq j}a_i}{n-1}\right)^2 = \frac{n-2}{n-1}(a_j - \bar{a})^2 \geq 0,
    \]
    which establishes the case \(k=1\).

    For general $k$, one has a refined sequence $\{b_1',\dots, b_{n-k+1}'\}$ satisfying $b_{i-1}\geq b_i'\geq b_i$ and $a_i\geq b_i'\geq a_{i+k-1}$. Applying the induction assumption to $\{a_i\}$ and $\{b_i'\}$ yields the desired inequality.
\end{proof}
\section{An Analytic Criterion for Finite Volume}
Definition \ref{defn:3:1} is indeed equivalent to the actual finite-volume condition:
\begin{prop}\label{lem:append:2}
    Let \(\mathbf D\subset\mathbf{P}(\mathrm{Sym}_n(\mathbb{R}))\) be a finitely-sided projective convex polytope, and \(D\subset\mathcal X_{n,\mathrm{proj}}\) be its restriction to $\mathcal{X}_n$. Then \(D\) has finite Riemannian volume in \(\mathcal X_n\) if and only if \(\mathbf D\subset\overline{\mathcal X_n}\).
\end{prop}
\begin{proof}
    We describe the Riemannian volume form on $\mathcal{X}_n$ by the standard projective volume form (see e.g. \cite{eberlein1996geometry}):
    \[
    d\mu(x_{ij}) = \frac{\iota_E\bigwedge_{i\leq j}dx_{ij}}{(\det(x_{ij}))^{(n+1)/2}},\ E = \sum_{i\leq j}x_{ij}\frac{\partial }{\partial x_{ij}}.
    \]
    \textbf{Necessity}. If \(\mathbf D\not\subset\overline{\mathcal X_n}\), then there is a boundary point \(X_0\in\partial_S D\) with \(\det X_0=0\) and a small projective-neighborhood \(U\ni X_0,\ U\subset\mathbf D\).  On \(U\cap \partial_S\mathcal{X}_n\) the denominator \(\det X\) vanishes to first order, so \(\int_{U\cap \overline{\mathcal{X}_n}} d\mu = \infty\). Hence \(D\) cannot have finite volume.
    
    \textbf{Sufficiency}. Conversely, assume \(\mathbf D\subset\overline{\mathcal X_n}\). We show each boundary neighborhood contributes a finite amount to the volume integral; compact interior patches are manifestly finite.

    Fix a boundary stratum of type \(n-k\).  After conjugating, we may take
    \[
    X_0 = \mathrm{diag}(I_k,O),\quad k<n.
    \]
    Introduce homogeneous coordinates near \(X_0\):
    \[
    X = X_0 + \begin{pmatrix}
      A & B\\
      B^{\mathsf T} & C
    \end{pmatrix},
    \quad
    A\in\mathrm{Sym}_k(\mathbb{R}),\ \mathrm{tr}(A) = 0,\  B\in M_{k,n-k}(\mathbb{R}),\ C\in\mathrm{Sym}_{n-k}(\mathbb{R}).
    \]
    Positivity of \(X\) forces \(C\) to lie in a convex polytope of the projective cone. Writing \(C = t\,Y\) with \(t\ge0\) and \(Y\in\mathcal X_{n-k}\), then $Y\in D_0\subset \mathcal{X}_{n-k}$. For fixed $Y\in D_0$, positive-definiteness along with the polyhedral property together show that entries in $B$ are $O(t)$, therefore $\det(X) = t^{n-k}\det(Y) + O(t^{n-k+1})$. By the compactness of $\overline{D_0}$ one has
    \[
    \det(X)\geq \frac{1}{2}t^{n-k}\det(Y),
    \]
    in a sufficiently small neighborhood.
    
    The volume form factorizes (up to a bounded Jacobian) as
    \[
    \iota_E\!\bigwedge d x_{ij}
    \;\approx\;
    \left(\iota \bigwedge dA\right)\left(\bigwedge dB\right)\left(\bigwedge dC\right).
    \]
    For fixed $C$, the positive-definiteness show that
    \[
    \int \bigwedge dB\leq 2^{k(n-k)}(\det(C))^{k/2} = 2^{k(n-k)}(\det(Y))^{k/2}t^{k(n-k)/2}.
    \]
    For fixed $t$, $B = O(t)$, $\int \bigwedge dB = O(t^{k(n-k)})$, thus one estimates
    \[
    \int \bigwedge dB\leq K (\det(Y))^{k/2}t^{k(n-k)},\ K<\infty.
    \]
    Meanwhile, $C = tY$ contributes
    \[
    \bigwedge dC = t^{(n-k-1)(n-k+2)/2}dt\wedge \left(\iota_E \bigwedge_{j\geq i\geq k+1} dy_{ij}\right),\ E = \sum_{i\leq j}y_{ij}\frac{\partial}{\partial y_{ij}}.
    \]
    Hence the singularity near $X$ is integrable:
    \begin{align*}
        & \int_{U_\epsilon}d\mu = \int_{U_\epsilon}\frac{\iota_E \bigwedge dx_{ij}}{\det(X)^{(n+1)/2}} \leq \int_{U_\epsilon}\frac{2^{(n+1)/2}\iota_E \bigwedge dx_{ij}}{\det(tY)^{(n+1)/2}} \\
        & = \int_{|x_{ij}|<\epsilon}\iota\bigwedge dA\int \bigwedge dB\int_0^\epsilon dt\int_{D_0}\frac{2^{(n+1)/2}t^{(n-k-1)(n-k+2)/2}\iota_E\bigwedge_{i\leq j}dy_{ij}}{\det(tY)^{(n+1)/2}} \\
        & \leq (2\epsilon)^{(k-1)(k+2)/2}\int_0^\epsilon dt\int_{D_0}K(\det(Y))^{k/2}t^{k(n-k)}\frac{2^{(n+1)/2}t^{(n-k-1)(n-k+2)/2}\iota_E\bigwedge_{i\leq j}dy_{ij}}{\det(tY)^{(n+1)/2}} \\
        & = K'\epsilon^{(k-1)(k+2)/2}\int_0^\epsilon dt\int_{D_0}\frac{t^{k(n-k)/2-1}\iota_E\bigwedge_{i\leq j}dy_{ij}}{\det(Y)^{(n-k+1)/2}} \\
        & = \frac{K'\epsilon^{k(n+1)-1}}{k(n-k)}\mathrm{Vol}(D_0)<\infty,
    \end{align*}
    using the induction assumption for $\mathcal{X}_{n-k}$. Covering \(\mathbf D\) by finitely many such boundary charts plus an interior compact set shows \(\int_D d\mu<\infty\).
\end{proof}
% k(n-k)/2 + k(k+1)/2 - 1
\bibliography{reference}{}
\bibliographystyle{alpha}
\end{document}